
\documentclass[letterpaper, 10 pt, conference]{ieeeconf} 

\usepackage{kdayiieee}
\usepackage{mathrsfs}
\setlength{\marginparwidth}{2cm}
\usepackage{xcolor}

\IEEEoverridecommandlockouts                              

\overrideIEEEmargins                                      




\title{\LARGE \bf
Projected Push-Pull For Distributed Constrained Optimization Over
Time-Varying Directed Graphs (extended version)
}

\author{Orhan Eren Akg\"un$^{*}$, Arif Kerem Day\i$^{*}$, Stephanie Gil, and Angelia Nedi\'c
\thanks{(*Co-primary authors). O.~E.~Akgün, A.~K.~Day\i, and S.~Gil are with the School of Engineering and Applied Sciences, Harvard University, USA: {\small erenakgun@g.harvard.edu, keremdayi@college.harvard.edu, sgil@seas.harvard.edu}. A.~Nedi\`c is with the School of Electrical, Computer and Energy Engineering, Arizona State University, USA: {\small Angelia.Nedich@asu.edu}.}%
\thanks{The authors gratefully acknowledge partial support through NSF CNS 2147641 and 2147694.}%
}

\begin{document}

\maketitle
\thispagestyle{empty}
\pagestyle{empty}

\begin{abstract}
We introduce the Projected Push-Pull algorithm that enables multiple agents to solve a distributed constrained optimization problem with private cost functions and global constraints, in a collaborative manner. 
Our algorithm employs projected gradient descent to deal with constraints and a lazy update rule to control the trade-off between the consensus and optimization steps in the protocol. We prove that our algorithm achieves geometric convergence over time-varying directed graphs while ensuring that the decision variable always stays within the constraint set. We derive explicit bounds for step sizes that guarantee geometric convergence based on the strong-convexity and smoothness of cost functions, and graph properties. Moreover, we provide additional theoretical results on the usefulness of lazy updates, revealing the challenges in the analysis of any gradient tracking method that uses projection operators in a distributed constrained optimization setting. We validate our theoretical results with numerical studies over different graph types, showing that our algorithm achieves geometric convergence empirically.

\end{abstract}

\section{INTRODUCTION}
In this paper, we are concerned with a class of distributed optimization problems where a set of $n$ agents are trying to solve a problem with the structure:
\begin{align}
    \min_{x\in \mX} f(x),\ \text{ where }\ f(x)\triangleq\frac{1}{n}\sum_{i=1}^n f_i(x),
    \label{eq:problem_def_intro}
\end{align}
where $x$ is the decision variable, each cost function $f_i:~\R^d \to \R$ is known by agent $i$ only and is strongly convex with Lipschitz continuous gradients, and the constraint set $\mX \subseteq \R^d$ is closed and convex. We are interested in the case where agents communicate over a possibly time-varying directed graph $\mG_k=(\mV,\mE_k)$ where $\mV$ with $|\mV|=n$ represents the set of agents and the set $\mE_k$ represents the directed communication links at time $k$. This setup has various applications in control \cite{nedic_control_2018}, robotics \cite{carnevale_coordination}, and sensor networks \cite{nedic_distributed_sensors}. 

Many distributed optimization applications demand fast algorithms due to time and computational constraints, which makes the convergence rate of the used algorithm critical. However, a simple extension of gradient descent to distributed optimization does not achieve geometric convergence even with strongly convex cost functions \cite{harnessing_smoothness_cdc,harnessing_smoothness_journal}. 
Therefore, gradient tracking was introduced to achieve geometric convergence in undirected \cite{harnessing_smoothness_cdc, xu2015augmented, DIGing} and directed graphs \cite{DIGing, FROST_og, AB_usman, push-pull}. In gradient tracking methods, agents maintain a decision variable and an estimate of the global gradient. At each step, agents first perform a consensus step and an optimization step on the decision variable using the estimated global gradient. Then, they update their estimate of their global gradient using their neighbors' estimates and their local gradient.
In particular, the Push-Pull algorithm introduced in \cite{AB_usman, push-pull} achieves geometric convergence in directed, time-varying graphs \cite{AB_usman_time_varying, nedich2022ab-timevarying}. Unlike other gradient tracking methods, Push-Pull uses row and column stochastic mixing matrices for averaging decision and gradient tracking variables, respectively. Therefore, it does not require estimating the non-one Perron vector of the mixing matrix, which would introduce additional communication and computation costs. 
However, it does not handle constrained optimization problems. Indeed, despite great progress in distributed unconstrained optimization algorithms, their counterparts in the constrained optimization space still remain underexplored. Our goal in this work is to develop a projected gradient descent based Push-Pull algorithm variant to achieve geometric convergence in \emph{constrained optimization problems} over time-varying directed graphs.

Extending gradient tracking methods, including Push-Pull, to handle constrained optimization is a non-trivial task. Projection based algorithms for constrained optimization have some fundamental differences from their counterparts for unconstrained optimization.
First, the non-linearity of the projection operator limits our ability to manipulate the mixing matrices in the analysis, which is an essential part of the analysis in the unconstrained case. 
Second, in the unconstrained case, the global gradient vanishes at the optimal point, which is heavily used in existing analyses. However, the gradient does not necessarily vanish at the optimum in the constrained setting. 
Since the gradient at the optimal point can be non-zero, the step size in the constrained case does not give fine-grained control over the tradeoff between different errors, such as the optimality error, consensus error, and the gradient tracking error, which are standard in the analysis of all gradient tracking methods.

The work \cite{liu2020discrete} is an initial attempt to overcome these challenges in gradient tracking methods in the constrained setting. The reference \cite{luan2023distributed} improves this algorithm by removing the need for multiple projection calculations at every step. Both these works use only row stochastic mixing matrices and, therefore, require estimating the non-one Perron vector of the mixing matrix, increasing computation and communication costs. 
Moreover, both algorithms are proposed for static graphs, and they require multiple consensus steps per optimization step, further increasing communication costs. Recent work in \cite{error_projection} eliminates the need for multiple consensus steps. However, their results are limited to static and undirected graphs, and the decision variables are not guaranteed to stay in the constraint set at every time step. 

Ideally, we want a distributed constrained optimization algorithm that 1)~achieves a geometric convergence rate, 2)~works for directed graphs and time-varying graphs, 3)~has low communication cost (i.e., does not require multiple consensus steps or estimation of additional system parameters), 4)~minimizes the number of costly operations such as projection, and 5)~keeps the decision variable in the constraint set at all time steps.
With this motivation, we introduce the Projected Push-Pull algorithm that satisfies all the aforementioned requirements. Similar to Push-Pull, we employ row and column stochastic mixing matrices. This allows our algorithm to work in directed time-varying graphs without needing to estimate the non-one Perron vector of the mixing matrices. To handle the constrained case, we use projection to keep the decision variables in the constraint set and an extra step size to control the tradeoff between consensus and optimization. We prove the geometric convergence rate of the algorithm for time-varying directed graphs. Our contributions can be summarized as follows
\begin{itemize}
    \item We introduce a novel distributed projected gradient algorithm based on the Push-Pull to solve distributed constrained optimization problems with structure as in~\Cref{eq:problem_def_intro} over time-varying directed graphs.
    \item We prove that with a small enough step size, our algorithm has a geometric convergence rate for time-varying directed graphs. We characterize the valid range for the step size based on various problem-based parameters, such as the smoothness and strong convexity of the cost functions and properties of the communication graph.
    \item We provide impossibility results that show some fundamental limitations of distributed gradient methods using projection in constrained optimization settings. 
    \item We empirically show that our algorithm attains geometric convergence via numerical studies with different graph types.
\end{itemize}

\section{Notation \& Terminology}
All vectors are column vectors by default unless stated otherwise. The $i$-th entry of a vector $u$ is denoted by $u_i$; it is $[u_k]_{i}$ if $u_k$ is time varying where $k \geq 0$ is the time step. For a vector $u$, $\min u$ and $\max u$ denote the smallest and largest entries of $u$, respectively. For any matrix $A$, we denote its $ij$-th entry by $A_{ij}.$ If it is a time-varying matrix, we denote it by $[A_k]_{ij}.$ We denote the smallest positive element of a non-negative matrix $A$ by $\min \{A^+\}.$ A non-negative matrix is row-stochastic if all of its row sums are equal to $1$, and it is column stochastic if all of its column sums are equal to~$1.$

We use $\langle a, b \rangle$ to denote the Euclidean inner product and $\norm{x}=\sqrt{\langle x, x \rangle}$ to denote the Euclidean norm. For any vector $u \in \R^n$ with $u_i > 0$, we define the $u$-weighted norm of $\boldx \in \R^d \times \dots \times \R^d$ ($n$ copies of $\R^d$) as $\norm{\boldx}_{u}=\sqrt{\sum_{i=1}^n u_i \norm{x_i}^2}$ where $x_i \in \R^d$. 

A directed graph $\mG=(\mV, \mE)$ is said to be strongly connected if there is a directed path between any pair of the nodes in the graph. Finally, we define a projection operator as follows:
\begin{definition}[Projection onto $\mX$]
    Let $\mX\subseteq \R^d$ be closed and convex and $\norm{\cdot}$ be the Euclidean norm on $\R^d$. Then, we define the projection operator $\proj{\cdot}: \R^d \to \R^d$ as follows
    \begin{align*}
        \proj{x} = \arg\min_{z\in \mX} \norm{x-z}.
    \end{align*}
\end{definition}

\section{PROBLEM SETUP}
We consider a distributed multi-agent system of $n$ agents where agents need to solve a distributed optimization task in a collaborative manner. The agents' goal is to solve the following constrained minimization problem 
\begin{equation}
    \min_{x\in \mX} f(x),\ \text{ where }\ f(x)\triangleq\frac{1}{n}\sum_{i=1}^n f_i(x),
    \label{eq:minimization_goal}
\end{equation}
where each cost function $f_i: \R^d \to \R$ is known by agent $i$ only and the constraint set $\mX \subseteq \R^d$ is closed and convex. We make the following assumptions about the cost functions:
\begin{assumption}[Strongly Convex  Objective]\label{assumption:objective-strong-convexity}
    For all agents $i$, $f_i(x)$ is $\mu$-strongly convex, i.e, for some $\mu >0$, we have $\langle \nabla f_i(x)-\nabla f_i(y), x- y\rangle \geq \mu \normsq{x-y}$, for all $x,y \in \R^d.$ 
\end{assumption}

\begin{assumption}[Lipschitz Continuity of Gradients]\label{assumption:objective-smoothness}
    For all agents $i$, $\nabla f_i(x)$ is $L$-Lipschitz continuous, i.e, for some $L >0$, $\norm{\nabla f_i(x)-\nabla f_i(y)}\leq L \norm{x-y}$, for all $x,y \in \R^d.$ 
\end{assumption}

We assume that at each time step $k \in \N$, agents communicate over a directed graph, denoted by $\mG_k=(\mV,\mE_k)$. The set $\mV$ with $|\mV|=n$ represents the set of agents and the set $\mE_k$ represents the directed communication links at time $k$. An edge $(i,j)\in\mE_k$ indicates that agent $i$ can send information to agent $j$ at time $k$. Moreover, if $(i,j)\in\mE_k$, we say that $i$ is an in-neighbor of $j$ and $j$ is an out-neighbor of $i$.  
We make the following assumption on the communication graphs $\mG_k.$
\begin{assumption}[Strong Connectivity]
    \label{assumption:graph-connectivity}
    $\mG_k$ is strongly connected for all $k$.
\end{assumption}

\section{ALGORITHM}
In this section, we introduce the Projected Push-Pull algorithm. In the algorithm, all agents maintain two decision variables $x_i[k]$ and $z_i[k]$, and a gradient tracking variable $y_i[k]$. Agents initialize $x_i[0]=z_i[0]\in \mX$ arbitrarily, and $y_i[0]=\nabla f_i(x_i[0])$. At each communication round $k$, agents get $z_j[k]$ and the scaled gradient tracking variable $[C_k]_{ij}y_j[k]$ from their in-neighbors and perform the following updates:
\begin{subequations}\label{eq:update_rule}
    \begin{align}
    x_{i}[k+1]&= \sum_{j=1}^n [R_{k}]_{ij}z_{j}[k],\label{eq:update_rule_a}\\
    y_{i}[k+1]&= \sum_{j=1}^n [C_{k}]_{ij}y_j[k]+\nabla f_i(x_i[k+1])-\nabla f_i(x_i[k]),\label{eq:update_rule_b}\\
    z_i[k+1]&=(1-\lambda)x_i[k+1]\label{eq:update_rule_c}\\
    &+\lambda \proj{x_i[k+1]-\eta y_i[k+1]},\nonumber
\end{align}
\end{subequations}
where $\eta>0, \lambda \in (0, 1]$ are two different step sizes. We will refer to \Cref{eq:update_rule_c} as the lazy update rule. We formally describe how agent $i\in \mV$ runs the protocol in \Cref{alg:push-pull}. 
\begin{algorithm}
\caption{Projected Push-Pull}
\begin{algorithmic}[1]
\REQUIRE Optimization parameters $\eta, \lambda$, chosen according to \Cref{theorem:convergence-of-protocol}.
\STATE Each agent $i$ simultaneously does the following:
\STATE Initialize $x_i[0]=z_i[0] \in \mX$ arbitrarily, and set $y_i[0]=~ \nabla f_i(x_i[0])$. 
\WHILE{$k=0, 1, \dots$}
\STATE Determine coefficients $[C_{k}]_{ji}$ for $j\in \mV.$
\STATE Send $z_i[k]$, $[C_k]_{ji}y_i[k]$ to out-neighbors $j \in \mN_{i}^{out}[k].$
\STATE Receive $z_j[k]$, $[C_k]_{ij}y_j[k]$ from in-neighbors $j \in ~ \mN_{i}^{in}[k].$
\STATE Determine coefficients $[R_k]_{ij}$ for $j\in \mV.$
\STATE Perform the consensus update using \Cref{eq:update_rule_a}:
$$x_i[k+1] \gets \sum_{j=1}^n [R_k]_{ij}z_j[k].$$
\STATE Perform the gradient tracking update using \Cref{eq:update_rule_b}:
\begin{align*}
    y_i[k+1] \gets &\sum_{j=1}^n [C_k]_{ij}y_j[k] \\ &+ \nabla f_i(x_i[k+1]) - \nabla f_i(x_i[k]).
\end{align*}
\STATE Perform the lazy optimization update using \Cref{eq:update_rule_c}:
\begin{align*}
    z_i[k+1] \gets &(1-\lambda) x_i[k+1] \\
                    &+  \lambda \proj{x_i[k+1] - \eta y_i[k+1]}.
\end{align*}

\ENDWHILE
\end{algorithmic}
\label{alg:push-pull}
\end{algorithm}

We make the following assumptions on the matrices $R_k$ and $C_k$: 
\begin{assumption}[Graph Compatibility of $R_k$]
    \label{assumption:compatible_R}
    For all $k>0$, the matrix $R_k$ is row stochastic and it is compatible with the graph $\mG_k$, i.e., $[R_k]_{ij}>0$ if and only if $(j,i)\in \mE_k$ or $i=j$ and $[R_k]_{ij}=0$ otherwise. Moreover, for some $\Rmin> 0$ we have $\min \{R_k^+\} \geq \Rmin$ for all $k > 0.$ 
\end{assumption}

\begin{assumption}[Graph Compatibility of $C_k$]
    \label{assumption:compatible_C}
    For all $k>0$, the matrix the matrix $C_k$ is column stochastic and it is compatible with the graph $\mG_k$, i.e., $[C_k]_{ij}>0$ if and only if $(j,i)\in \mE_k$ or $i=j$ and $[C_k]_{ij}=0.$ Moreover, for some $\Cmin> 0$ we have $\min \{C_k^+\} \geq \Cmin$ for all $k > 0.$ 
\end{assumption}
Under Assumption~\ref{assumption:compatible_C}, the algorithm satisfies the gradient tracking property, that is $
       \sum_{i=1}^n y_i[k] = \sum_{i=1}^n \nabla f_i(x_i[k]),
    $ at each time step $k.$
    
The key differences between this algorithm and the AB/Push-Pull algorithm \cite{nedich2022ab-timevarying} are as follows: 1) agents calculate the gradients at $x_i[k]$, which is after the consensus step \Cref{eq:update_rule_a}, 2) we introduce a projection operator in the calculation of $z_i[k]$, and 3) we use an additional step size $\lambda$ to give agents more control over the trade-off between the consensus and optimization. 

\section{MAIN RESULTS}
In this section, we state the main results concerning the convergence of our algorithm to the optimal point. First, will provide some core results about the behavior of graph compatible row stochastic and column stochastic matrices, and their contraction behavior. Then, we provide our main theorem showing the geometric convergence of our algorithm. We note that the analysis accompanying these results is given in the next section \Cref{sec:analysis}. 
\subsection{Preliminaries} 
We use the following lemmas to define stochastic vectors that will be used in our analysis.
\begin{lemma}[\cite{nguyen2022distributed}, Lemma 5.4 and \cite{nedich2022ab-timevarying}, Lemma 3.3]
\label{lemma:phi-sequence}
    Let \Cref{assumption:graph-connectivity} hold and $\{R_k\}$ be a row stochastic matrix sequence satisfying \Cref{assumption:compatible_R}. Then, there exists a sequence of positive stochastic vectors $\{\phi_k\}$ such that $\phi_{k+1}^\intercal R_k = \phi_{k}^\intercal$, where the entries of each $\phi_k$ are positive and have the uniform lower bound
$[\phi_k]_i \geq \frac{(\Rmin)^n}{n}$ for all $i \in [n]$ and $k \geq 0$.  
\end{lemma}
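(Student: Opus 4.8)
The plan is to construct $\phi_k$ as the limit of backward products of the matrices $R_k$ and to read off both the eigenvector relation and the lower bound from properties of these products. For $s \ge k$ write $R_{s:k} \triangleq R_s R_{s-1}\cdots R_k$, so that these backward products are themselves row stochastic and satisfy the consistency identity $R_{s:k} = R_{s:k+1}R_k$. The goal is to show that, for each fixed $k$, the matrices $R_{s:k}$ converge as $s\to\infty$ to a rank-one matrix $\mathbf{1}\phi_k^\intercal$ with $\phi_k$ a stochastic vector. Once this is known, passing to the limit in the consistency identity yields $\mathbf{1}\phi_k^\intercal = \mathbf{1}\phi_{k+1}^\intercal R_k$, and hence $\phi_{k+1}^\intercal R_k = \phi_k^\intercal$ as required, while each $\phi_k$ is stochastic because every $R_{s:k}$ is.

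First I would establish that any product of $n$ consecutive matrices is entrywise bounded below by $(\Rmin)^n$. Because each $R_k$ is compatible with a strongly connected graph on $n$ nodes and has a positive diagonal (the case $i=j$ in \Cref{assumption:compatible_R}), the $(j,i)$ entry of $R_{k+n-1:k}$ is positive whenever there is a directed walk of length $n$ from $i$ to $j$; the diameter bound $\le n-1$ together with self-loops used for padding guarantees such a walk for every ordered pair, and each of its $n$ edges contributes a factor at least $\Rmin$. Consequently every block of $n$ steps produces a strictly positive (scrambling) matrix, which makes Hajnal's coefficient of ergodicity contract geometrically, $\tau(R_{s:k}) \lesssim (1-(\Rmin)^n)^{\lfloor (s-k)/n\rfloor}$. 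This contraction forces the rows of $R_{s:k}$ to become asymptotically identical, so the sequence is Cauchy and converges to a rank-one limit $\mathbf{1}\phi_k^\intercal$.

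It remains to transfer the lower bound to $\phi_k$. Iterating the eigenvector relation $n$ steps gives $\phi_k^\intercal = \phi_{k+n}^\intercal R_{k+n-1:k}$, and the entrywise bound on the $n$-step product yields $[\phi_k]_i = \sum_j [\phi_{k+n}]_j [R_{k+n-1:k}]_{ji} \ge (\Rmin)^n \sum_j [\phi_{k+n}]_j = (\Rmin)^n$, since $\phi_{k+n}$ is stochastic; this is even stronger than, and in particular implies, the claimed bound $[\phi_k]_i \ge (\Rmin)^n/n$. I expect the main obstacle to be the convergence step rather than the bounds: a pure compactness argument (a diagonal subsequence in the compact set of stochastic vectors) would only produce subsequential limits and would not by itself pin down the exact consistency relation at every $k$, so the real work is the uniform-in-$s$ contraction, which is precisely what the scrambling property of the $n$-step products delivers. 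I would carry out this contraction estimate as in the cited Lemma 5.4 of \cite{nguyen2022distributed} and Lemma 3.3 of \cite{nedich2022ab-timevarying}.
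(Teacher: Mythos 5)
The paper offers no proof of this lemma: it is imported verbatim from \cite{nguyen2022distributed} and \cite{nedich2022ab-timevarying}, so there is nothing internal to compare against. Your reconstruction via backward products is essentially the standard argument behind those cited results, and it is sound: the $n$-fold products $R_{k+n-1}\cdots R_k$ are entrywise at least $(\Rmin)^n$, this makes the backward products scrambling, the Dobrushin/Hajnal coefficient then contracts geometrically, the rows of $R_{s:k}$ collapse to a common stochastic vector $\phi_k$ (the nested-convex-hull-of-rows argument makes the Cauchy claim rigorous), and passing to the limit in $R_{s:k}=R_{s:k+1}R_k$ gives the consistency relation. Your lower bound $[\phi_k]_i\ge(\Rmin)^n$ is in fact stronger than the stated $(\Rmin)^n/n$ and is consistent with stochasticity of $\phi_k$ (strong connectivity forces at least two positive entries per row, hence $\Rmin\le 1/2$ and $n(\Rmin)^n\le 1$). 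Two small corrections. First, for time-varying graphs the phrase ``diameter bound together with self-loops'' is not quite the right justification for positivity of the $n$-step product; the clean argument is that the set of nodes reached from a fixed column index grows by at least one node per step (self-loops prevent shrinkage, strong connectivity of each $\mG_{k+t}$ forces an outgoing edge from the current reachable set), so it saturates within $n-1$ steps. Second, your claim that a compactness/diagonal argument ``would not pin down the consistency relation'' is incorrect: fixing a stochastic $u$ and setting $\phi_k^{(s)\intercal}=u^\intercal R_s\cdots R_k$ gives $\phi_k^{(s)\intercal}=\phi_{k+1}^{(s)\intercal}R_k$ for every finite $s$, and a single diagonal subsequence in $s$ preserves this identity for all $k$ in the limit; this is Kolmogorov's construction of an absolute probability sequence and needs no ergodicity. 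Neither point affects the validity of your proof, which proves the stronger statement that the limits actually exist.
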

\begin{lemma}[ \cite{nedich2022ab-timevarying}, Lemma 3.4]
\label{lemma:pi-sequence}
    Let \Cref{assumption:graph-connectivity} hold and $\{C_k\}$ be a matrix sequence satisfying \Cref{assumption:compatible_C}. Set $\pi_0 = \frac{1}{n}\boldone$ and define the sequence $\pi_{k+1} = C_k \pi_k $. Then, each vector in $\{\pi_k\}$ is stochastic, and we have $[\pi_k]_i \geq \frac{(\Cmin)^n}{n}.$
\end{lemma}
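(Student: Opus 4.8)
The plan is to prove the two assertions separately, both by induction on $k$: the stochasticity of each $\pi_k$ is routine, whereas the uniform lower bound is the substantive part and will rely crucially on \Cref{assumption:graph-connectivity}.

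For stochasticity I would induct on $k$. The base case $\pi_0=\tfrac1n\boldone$ is nonnegative with entries summing to one. For the step, nonnegativity of $\pi_{k+1}=C_k\pi_k$ holds because $C_k$ and $\pi_k$ are entrywise nonnegative, and the total mass is preserved because $C_k$ is column stochastic: $\boldone^\intercal\pi_{k+1}=\boldone^\intercal C_k\pi_k=\boldone^\intercal\pi_k=1$, where $\boldone^\intercal C_k=\boldone^\intercal$ is exactly \Cref{assumption:compatible_C}. Thus every $\pi_k$ is stochastic, which in particular guarantees that at each time some node carries mass at least $\tfrac1n$, a fact I will reuse below.

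For the lower bound I would first dispose of small $k$. Since every entry of a column-stochastic matrix is at most one, $\Cmin\le 1$, so the base case gives $[\pi_0]_i=\tfrac1n\ge\tfrac{(\Cmin)^n}{n}$; and for $1\le k\le n-1$, tracing the single all-self-loop path yields $[\pi_k]_i\ge[C_{k-1}]_{ii}\cdots[C_0]_{ii}[\pi_0]_i\ge\tfrac{(\Cmin)^k}{n}\ge\tfrac{(\Cmin)^n}{n}$, using that the positive diagonal entries are $\ge\Cmin$ and that $\Cmin\le1$. The heart of the proof is the case $k\ge n-1$, handled by a mass-spreading argument over the window $s:=k-(n-1),\ldots,k$. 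Pick $j^\star$ with $[\pi_s]_{j^\star}\ge\tfrac1n$, and grow a set of ``covered'' nodes $A_s=\{j^\star\}\subseteq A_{s+1}\subseteq\cdots$ maintaining the invariant that every $p\in A_t$ satisfies $[\pi_t]_p\ge\tfrac{(\Cmin)^{t-s}}{n}$. Two facts drive this: (i) each covered node retains a $\Cmin$-fraction of its mass through its self-loop, $[\pi_{t+1}]_p\ge[C_t]_{pp}[\pi_t]_p$; and (ii) whenever $A_t\neq\mV$, strong connectivity of $\mG_t$ forces an edge $(p,q)\in\mE_t$ with $p\in A_t$, $q\notin A_t$, so $[\pi_{t+1}]_q\ge[C_t]_{qp}[\pi_t]_p\ge\tfrac{(\Cmin)^{t+1-s}}{n}$, allowing me to enlarge the set to $A_{t+1}=A_t\cup\{q\}$. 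Since $|A_t|$ strictly increases until it equals $\mV$, within $n-1$ steps $A_k=\mV$, giving $[\pi_k]_i\ge\tfrac{(\Cmin)^{n-1}}{n}\ge\tfrac{(\Cmin)^n}{n}$ for every $i$.

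The main obstacle is the time-varying nature of the network: one cannot fix a short path in a single strongly connected graph and walk along it, since the edges of that path need not persist across the window $s,\ldots,k$. The resolution is precisely the incremental set-growth above, which at each step requires only \emph{one} fresh outgoing edge from the current covered set, supplied by the strong connectivity of whatever graph $\mG_t$ is active at time $t$, while the self-loops preserve the mass already accumulated. This avoids coordinating a single path through differing graphs and is what produces the clean exponent $n-1$, comfortably implying the stated bound $\tfrac{(\Cmin)^n}{n}$.
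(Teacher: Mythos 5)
Your proof is correct. Note that the paper does not prove this lemma at all --- it imports it by citation from \cite{nedich2022ab-timevarying} (Lemma 3.4) --- so there is no in-paper argument to compare against; your write-up is a valid self-contained substitute. The two ingredients you use are exactly the ones the statement's hypotheses are designed to supply: the positive diagonal of $C_t$ (the ``$i=j$'' clause of \Cref{assumption:compatible_C}) gives mass retention $[\pi_{t+1}]_p \ge [C_t]_{pp}[\pi_t]_p \ge \Cmin[\pi_t]_p$, and strong connectivity of each $\mG_t$ supplies one fresh outgoing edge from the covered set per step, so the set saturates within $n-1$ steps starting from the node carrying mass $\ge 1/n$. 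This incremental set-growth is also the standard device in the cited reference for handling time-varying graphs, precisely because a single path cannot be assumed to persist across the window; your exponent $n-1$ is in fact slightly sharper than the stated $(\Cmin)^n/n$, which it implies since $\Cmin\le 1$.
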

Now, we will define two lemmas about contractions of matrices $R_k$ and $C_k$ which allow the consensus of $x_i[k]$ and $y_i[k]$ values. 
\begin{lemma}[\cite{nguyen2022distributed}, Lemma 6.1]
    \label{lemma:R-contraction}
    Let $\mG=(\mV,\mE)$ be a strongly connected graph, and the row stochastic matrix $R$ be compatible with the graph. Let $\phi$ be a stochastic vector and $\phi'$ be a non-negative vector such that $\phi'^{\intercal}R=\phi^\intercal.$ Consider the vectors $z_1, z_2, \dots, z_n \in \R^d$ and $x_i = \sum_{j=1}^n R_{ij} z_j$ for all $i\in \mV$. Also define $\hat z_\phi \triangleq \sum_{i=1}^n \phi_i z_i$. Then, we have
    \begin{align*}
        \sum_{i=1}^n \phi'_i \normsq{x_i - u} \leq \sum_{j=1}^n \phi_j \normsq{z_j-u}\\
        -\frac{\min(\phi')(\min (R^+))^2}{\max^2 (\phi) \mathsf D(\mG) \mathsf K(\mG)}\sum_{j=1}^n \phi_j\normsq{z_j-\hat z_{\phi}},
    \end{align*}
    where $\mathsf D(\mG)$ and $\mathsf K(\mG)$ are the diameter and the maximum edge utility of $\mG$, respectively, as in \cite[Lemma 6.1]{nguyen2022distributed}. Letting $u= \sum_{j=1}^n \phi_j z_j=\sum_{j=1}^n \phi_j' x_j\triangleq \hat x_{\phi'}$, we get
    \begin{align*}
        \sqrt{\sum_{i=1}^n \phi_i' \normsq{x_i- \hat x_{\phi'}}} \leq \sigma \sqrt{\sum_{i=1}^n \phi_i \normsq{z_i - \hat z_{\phi}}},
    \end{align*}
    where $\sigma = \sqrt{1-\frac{\min(\phi') (\min R^+)^2}{\max^2 (\phi) \mathsf D(\mG) \mathsf K(\mG)}}\in (0, 1)$.
\end{lemma}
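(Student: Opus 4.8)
The plan is to convert the row-stochastic averaging step into an exact bias--variance identity and then bound its variance term from below using the graph structure. First I would record the elementary decomposition: for any convex weights $\{w_j\}$ with $\sum_j w_j=1$ and $x=\sum_j w_j z_j$, expanding $\normsq{z_j-u}$ about $x$ and using $\sum_j w_j(z_j-x)=0$ to annihilate the cross term gives
$$\sum_{j=1}^n w_j\normsq{z_j-u}=\normsq{x-u}+\sum_{j=1}^n w_j\normsq{z_j-x}.$$
Applying this with $w_j=R_{ij}$ and $x=x_i$ (valid since $R$ is row stochastic), multiplying by $\phi'_i$, and summing over $i$ yields
$$\sum_{i=1}^n\phi'_i\normsq{x_i-u}=\sum_{j=1}^n\Big(\sum_{i=1}^n\phi'_i R_{ij}\Big)\normsq{z_j-u}-\sum_{i=1}^n\phi'_i\sum_{j=1}^n R_{ij}\normsq{z_j-x_i}.$$
The relation $\phi'^\intercal R=\phi^\intercal$ collapses the first term to $\sum_j\phi_j\normsq{z_j-u}$, so the claimed inequality is equivalent to lower-bounding the nonnegative gap term $G\triangleq\sum_i\phi'_i\sum_j R_{ij}\normsq{z_j-x_i}$ by $c\sum_j\phi_j\normsq{z_j-\hat z_\phi}$ with $c\triangleq\frac{\min(\phi')(\min R^+)^2}{\max^2(\phi)\,\mathsf D(\mG)\mathsf K(\mG)}$. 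The same relation also yields $\hat x_{\phi'}=\sum_i\phi'_i x_i=\sum_j(\sum_i\phi'_i R_{ij})z_j=\sum_j\phi_j z_j=\hat z_\phi$, identifying the two weighted means.

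Second, I would establish the lower bound $G\ge c\sum_j\phi_j\normsq{z_j-\hat z_\phi}$, which is the crux and the step I expect to be the main obstacle. The point is that $G$ only sees squared differences $\normsq{z_j-x_i}$ across edges present in $R$, so it is essentially a sum of squared increments along the edges of $\mG$, whereas the target is the \emph{global} dispersion around the common mean. To connect them I would first write $\sum_j\phi_j\normsq{z_j-\hat z_\phi}=\tfrac12\sum_{a,b}\phi_a\phi_b\normsq{z_a-z_b}$ as a weighted sum of pairwise squared differences. Then, fixing for each ordered pair a shortest directed path (of length at most the diameter $\mathsf D(\mG)$, by strong connectivity), I would write each $z_a-z_b$ as a telescoping sum of per-edge increments and apply Cauchy--Schwarz, so a squared pairwise difference costs at most $\mathsf D(\mG)$ times the sum of squared increments along its path. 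Summing over all pairs, each edge is reused by at most $\mathsf K(\mG)$ of the chosen paths, which controls the congestion and produces the factor $\mathsf D(\mG)\mathsf K(\mG)$ in the denominator; replacing $R_{ij}$ by $\min R^+$ on present edges, $\phi'_i$ by $\min(\phi')$, and $\phi_j$ by $\max(\phi)$ supplies the remaining constants. This combinatorial chaining estimate is exactly the content of \cite[Lemma 6.1]{nguyen2022distributed}, which I would invoke for the detailed bound.

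Finally, I would specialize $u=\hat z_\phi=\hat x_{\phi'}$. Then $\sum_j\phi_j\normsq{z_j-u}=\sum_j\phi_j\normsq{z_j-\hat z_\phi}$, so the first inequality reduces to $\sum_i\phi'_i\normsq{x_i-\hat x_{\phi'}}\le(1-c)\sum_j\phi_j\normsq{z_j-\hat z_\phi}$. Since every factor defining $c$ is positive we have $c>0$, and the constants are arranged so that $c\in(0,1)$, hence $\sigma=\sqrt{1-c}\in(0,1)$ as asserted in \cite[Lemma 6.1]{nguyen2022distributed}. Taking square roots of both sides gives the stated contraction.
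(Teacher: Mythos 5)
There is no in-paper proof to compare against here: the paper states this lemma purely as a citation of \cite{nguyen2022distributed} and never proves it. Judged on its own terms, your outline is the standard skeleton for such a result and the parts you actually carry out are correct: the bias--variance identity $\sum_j w_j\normsq{z_j-u}=\normsq{x-u}+\sum_j w_j\normsq{z_j-x}$ applied row by row, the use of $\phi'^\intercal R=\phi^\intercal$ to collapse the first sum to $\sum_j\phi_j\normsq{z_j-u}$, the identification $\hat x_{\phi'}=\hat z_\phi$, and the final specialization $u=\hat z_\phi$ followed by a square root are all sound, and together they correctly reduce the lemma to the single inequality $G\triangleq\sum_i\phi'_i\sum_j R_{ij}\normsq{z_j-x_i}\ge c\sum_j\phi_j\normsq{z_j-\hat z_\phi}$.

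The issue is that this inequality \emph{is} the lemma: all of the quantitative content (the dependence on $\mathsf{D}(\mG)$, $\mathsf{K}(\mG)$, $\min R^+$, $\max(\phi)$, $\min(\phi')$) lives in that lower bound, and you explicitly defer it back to the very reference being cited, so nothing beyond bookkeeping has been independently established. Moreover, your sketch of that step does not, as written, account for the stated constant. The quantity $G$ is built from terms $\normsq{z_j-x_i}$, not from per-edge increments $\normsq{z_a-z_b}$, so the telescoping-along-paths argument cannot be applied to it directly; one first rewrites each row's dispersion as $\sum_j R_{ij}\normsq{z_j-x_i}=\frac{1}{2}\sum_{j,l}R_{ij}R_{il}\normsq{z_j-z_l}$, so that each retained pairwise difference carries a \emph{product} of two entries of $R$ --- this is where the $(\min R^+)^2$ in the numerator of $c$ comes from, which your ``replace $R_{ij}$ by $\min R^+$ on present edges'' step (a single power) would not produce; the chaining then runs over pairs of in-neighbors of a common node rather than over edges of $\mG$. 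Finally, $c\in(0,1)$ is asserted rather than argued: $c>0$ is immediate from positivity of the factors, but $c<1$ (needed for $\sigma$ to be a real number in $(0,1)$) requires a short additional check. If your intent is to cite \cite{nguyen2022distributed} for the core estimate --- which is exactly what the paper does --- the write-up is acceptable, but it should be presented as a reduction to that lemma rather than as a proof of it.
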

\begin{lemma}[\cite{nedich2022ab-timevarying}, Lemma 4.5]
    \label{lemma:C-contraction}
    Let $\mG=(\mV,\mE)$ be a strongly-connected graph and $C$ be a column stochastic matrix compatible with $\mG$. Assume $y_1, y_2, \dots, y_n \in \R^d$ and $v_i = \sum_{j=1}^n C_{ij} y_j$ for all $i\in \mV$. Let $\pi \in \R^n$ be a positive stochastic vector and $\pi' = C\pi$. Then, we have 
    \begin{align*}
        \sqrt{\sum_{i=1}^n \pi_i' \normsq{\frac{v_i}{\pi_i'}-\sum_{l=1}^n y_l}} \leq \tau \sqrt{\sum_{i=1}^n \pi_i \normsq{\frac{y_i}{\pi_i}-\sum_{l=1}^n y_l}},
    \end{align*}
    where $\tau = \sqrt{1- \frac{\min^2 (\pi) (\min C^+)^2}{\max^2(\pi) \max(\pi')\mathsf{D}(\mG) \mathsf K(\mG)}} \in (0,1).$
\end{lemma}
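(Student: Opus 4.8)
The plan is to reduce this column-stochastic contraction to the row-stochastic contraction already established in \Cref{lemma:R-contraction}, by rescaling the vectors with the weights $\pi$ and $\pi'$. First I would record the conservation law that underlies the whole statement: since $C$ is column stochastic, $\sum_{i=1}^n v_i = \sum_{i=1}^n \sum_{j=1}^n C_{ij} y_j = \sum_{j=1}^n y_j$, so the total mass $\bar w \triangleq \sum_{l=1}^n y_l$ is invariant under the map $y\mapsto v$. I would also note that $\pi'=C\pi$ is again a positive stochastic vector (stochasticity from column stochasticity of $C$, positivity from the positive diagonal forced by graph compatibility), so every quantity in the statement is well defined and $\mG$-dependent constants are shared.

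Next, introduce the rescaled variables $w_j \triangleq y_j/\pi_j$ and $w_i' \triangleq v_i/\pi_i'$, which are exactly the arguments inside the norms. Writing $v_i = \sum_j C_{ij}\pi_j w_j$ and dividing by $\pi_i' = \sum_j C_{ij}\pi_j$ gives $w_i' = \sum_j \widehat C_{ij} w_j$ with $\widehat C_{ij} \triangleq C_{ij}\pi_j/\pi_i'$. The matrix $\widehat C$ is the key object: its rows sum to one (so it is row stochastic), it has the same zero/nonzero pattern as $C$ (hence it is compatible with the same strongly connected $\mG$), and a one-line computation shows $\pi'^\intercal \widehat C = \pi^\intercal$, i.e. $\pi$ and $\pi'$ are precisely the pre- and post-weights required by \Cref{lemma:R-contraction}. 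Moreover $\bar w = \sum_l \pi_l w_l = \sum_i \pi_i' w_i'$, so the conserved total is simultaneously the $\pi$-weighted mean of the $w_j$ and the $\pi'$-weighted mean of the $w_i'$; this is exactly the common anchor $u=\hat x_{\phi'}=\hat z_\phi$ of that lemma.

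With these identifications the two sides of the claimed inequality become the weighted variances $\sum_i \pi_i'\normsq{w_i'-\bar w}$ and $\sum_j \pi_j\normsq{w_j-\bar w}$, so applying \Cref{lemma:R-contraction} to $\widehat C$ with $\phi=\pi$, $\phi'=\pi'$ and $u=\bar w$ yields a contraction with factor $\sqrt{1-\min(\pi')(\min\widehat C^+)^2/(\max^2(\pi)\mathsf D(\mG)\mathsf K(\mG))}$. The final step is to re-express this in terms of $C$ and the weights: lower-bounding a positive entry via $\widehat C_{ij}=C_{ij}\pi_j/\pi_i'\geq (\min C^+)\min(\pi)/\max(\pi')$ converts the decrement into the advertised $\min^2(\pi)(\min C^+)^2/(\max^2(\pi)\max(\pi')\mathsf D(\mG)\mathsf K(\mG))$ and gives $\tau\in(0,1)$.

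I expect the main obstacle to be this last bookkeeping on the contraction constant. The reduction is structurally clean, but landing on precisely the stated $\tau$ is delicate: the crude estimate $\pi_i'\leq\max(\pi')$ in the bound on $\min\widehat C^+$ appears to leave an extraneous $\min(\pi')/\max(\pi')$ slack, so one may need to track $\pi_i'$ more tightly (or prove the weighted-variance contraction for $\widehat C$ directly, re-deriving the geometric argument of \Cref{lemma:R-contraction} rather than invoking it verbatim) to match the constant exactly. The conceptual crux, namely that dividing by the Perron-type weights turns the column-stochastic averaging of the $y_i$ into a row-stochastic averaging of the $w_i$ with dual weights $\pi,\pi'$, is what makes the whole argument go through.
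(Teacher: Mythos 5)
The paper itself does not prove this lemma; it is imported verbatim from \cite{nedich2022ab-timevarying} (Lemma 4.5), so there is no in-paper proof to compare against. Your reduction is nonetheless the right one, and it is essentially the standard argument behind such statements: setting $w_j=y_j/\pi_j$, $w_i'=v_i/\pi_i'$ and $\widehat C_{ij}=C_{ij}\pi_j/\pi_i'$ does give a row-stochastic matrix with the same sparsity pattern as $C$, the identity $\pi'^\intercal\widehat C=\pi^\intercal$ holds by column stochasticity of $C$, and the conservation $\sum_i v_i=\sum_l y_l$ makes $\bar w$ simultaneously the $\pi$-mean of the $w_j$ and the $\pi'$-mean of the $w_i'$, so \Cref{lemma:R-contraction} applies with $\phi=\pi$, $\phi'=\pi'$, $u=\bar w$. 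All of these identities check out.

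The one genuine issue is the constant, and you have correctly diagnosed it yourself. Using \Cref{lemma:R-contraction} as a black box forces you through $\min\widehat C^+\geq(\min C^+)\min(\pi)/\max(\pi')$, which yields a decrement of $\min(\pi')\min^2(\pi)(\min C^+)^2/\bigl(\max^2(\pi)\max^2(\pi')\mathsf D(\mG)\mathsf K(\mG)\bigr)$ --- short of the stated one by a factor $\min(\pi')/\max(\pi')$, so your $\tau$ is strictly larger than the advertised one whenever $\pi'$ is not constant. This is harmless for everything the present paper does with the lemma (only a uniform bound $\tau<1$ is ever used), but it does not prove the statement as written. To land on the stated $\tau$ you must open up the underlying geometric estimate rather than quote its packaged form: the true decrement there is controlled by quantities of the form $\phi_i'R_{ij}^2$ over edges, and in the rescaled setting $\pi_i'\widehat C_{ij}^2=C_{ij}^2\pi_j^2/\pi_i'\geq(\min C^+)^2\min^2(\pi)/\max(\pi')$, where one power of $\pi_i'$ cancels and exactly the claimed constant emerges. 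So the conceptual crux of your argument is sound and the gap is precisely the bookkeeping step you flagged; it is fixable, but only by re-deriving the contraction for $\widehat C$ at the level of per-edge terms rather than invoking \Cref{lemma:R-contraction} verbatim.
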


Lastly, we introduce a lemma showing  the contraction properties of the projected gradient method. This lemma is an adaptation of a standard result in optimization for the projected gradient method (see \cite[Lemma 10]{harnessing_smoothness_journal}).
\begin{lemma}[Projected Gradient Contraction]
Let $\mX \subseteq \R^d$ be closed and convex set, and 
let $f: \R^d \to \R$ be $\mu$-strongly convex and $L$-smooth. Define $\mT_\eta(x)= \proj{x -\eta \nabla f(x)}$. For $0 < \eta < \frac{2}{\mu + L}$, we have
    \begin{align*}
        \norm{\mT_\eta (x) - \mT_\eta(y)} \leq q(\eta) \norm{x-y}
    \end{align*}
    where $q(\eta)=  1-\eta \mu < 1$.
\label{lemma:projected-gd-contraction}
\end{lemma}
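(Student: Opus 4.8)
The plan is to view $\mT_\eta$ as the composition of an unconstrained gradient step followed by a projection, and to control each piece separately. Write $G_\eta(x) \triangleq x - \eta\nabla f(x)$, so that $\mT_\eta(x) = \proj{G_\eta(x)}$. Since $\mX$ is closed and convex, the projection $\proj{\cdot}$ is nonexpansive, i.e. $\norm{\proj{a}-\proj{b}} \le \norm{a-b}$ for all $a,b \in \R^d$ (a standard consequence of the variational characterization of the projection). Applying this with $a = G_\eta(x)$ and $b = G_\eta(y)$ immediately gives $\norm{\mT_\eta(x)-\mT_\eta(y)} \le \norm{G_\eta(x)-G_\eta(y)}$, so the whole lemma reduces to showing that the gradient map $G_\eta$ is itself a $(1-\eta\mu)$-contraction.

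To bound $\norm{G_\eta(x) - G_\eta(y)}$, the key idea is to peel off the strongly convex part of $f$. Define $h \triangleq f - \frac{\mu}{2}\norm{\cdot}^2$; strong convexity of $f$ makes $h$ convex, and $L$-smoothness of $f$ makes $h$ be $(L-\mu)$-smooth, since $\frac{L-\mu}{2}\norm{\cdot}^2 - h = \frac{L}{2}\norm{\cdot}^2 - f$ is convex. Writing $\nabla f(x) = \mu x + \nabla h(x)$ yields $G_\eta(x) - G_\eta(y) = (1-\eta\mu)(x-y) - \eta(\nabla h(x)-\nabla h(y))$. Expanding the squared norm, I would use the co-coercivity of the gradient of the convex $(L-\mu)$-smooth function $h$, namely $\langle \nabla h(x)-\nabla h(y), x-y\rangle \ge \frac{1}{L-\mu}\normsq{\nabla h(x)-\nabla h(y)}$, to bound the cross term. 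Collecting terms, the coefficient multiplying $\normsq{\nabla h(x)-\nabla h(y)}$ becomes $\eta^2 - \frac{2\eta(1-\eta\mu)}{L-\mu}$, which is nonpositive precisely when $\eta \le \frac{2}{\mu+L}$; discarding this nonpositive term leaves $\normsq{G_\eta(x)-G_\eta(y)} \le (1-\eta\mu)^2\normsq{x-y}$. Taking square roots (note $1-\eta\mu>0$ since $\eta < \frac{2}{\mu+L}\le \frac{1}{\mu}$, because $\mu \le L$) finishes the argument.

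The main obstacle I anticipate is obtaining the exact factor $1-\eta\mu$ rather than a weaker one. A direct expansion using only $\langle\nabla f(x)-\nabla f(y),x-y\rangle \ge \mu\normsq{x-y}$ together with $\norm{\nabla f(x)-\nabla f(y)}\le L\norm{x-y}$ (or the combined strong-convexity/smoothness inequality) only yields a contraction factor such as $\sqrt{1-2\eta\mu L/(\mu+L)}$, which is strictly larger than $1-\eta\mu$ for $\eta$ in the open interval $(0,\frac{2}{\mu+L})$ and coincides with it only at the endpoint. The decomposition $f = h + \frac{\mu}{2}\norm{\cdot}^2$ is exactly what extracts the sharper rate, and the hypothesis $\eta<\frac{2}{\mu+L}$ enters precisely to annihilate the gradient-difference term. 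The only remaining care is the degenerate case $\mu=L$, where $h$ is affine, $\nabla h$ is constant, and the identity $\normsq{G_\eta(x)-G_\eta(y)} = (1-\eta\mu)^2\normsq{x-y}$ holds directly without invoking co-coercivity.
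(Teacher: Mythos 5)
Your proof is correct, and it reaches the sharp factor $1-\eta\mu$ by a genuinely different mechanism than the paper. Both arguments begin identically: non-expansivity of $\proj{\cdot}$ reduces the claim to showing that the gradient map $G_\eta(x)=x-\eta\nabla f(x)$ is a $(1-\eta\mu)$-contraction, and both then expand a squared norm. The paper gets the exact factor by invoking the combined strong-convexity/smoothness inequality of \cref{lemma:grad-inner-product} with an artificially \emph{inflated} smoothness constant $L'=\tfrac{2}{\eta}-\mu\ge L$ (valid since smoothness is preserved under enlarging the constant, and $\eta<\tfrac{2}{\mu+L}$ guarantees $L'\ge L$); this choice makes $\tfrac{1}{\mu+L'}=\tfrac{\eta}{2}$, so the coefficient of $\normsq{\nabla f(x)-\nabla f(y)}$ vanishes identically and the coefficient of $\normsq{x-y}$ collapses to $(1-\eta\mu)^2$. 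You instead peel off the quadratic, writing $f=h+\tfrac{\mu}{2}\normsq{\cdot}$ with $h$ convex and $(L-\mu)$-smooth, and apply Baillon--Haddad co-coercivity to $\nabla h$; the step-size condition then enters as the sign condition $\eta^2-\tfrac{2\eta(1-\eta\mu)}{L-\mu}\le 0$, which is equivalent to $\eta\le\tfrac{2}{\mu+L}$, and you discard a nonpositive term rather than annihilating it. The two routes are close cousins --- the cited inequality in \cref{lemma:grad-inner-product} is itself usually proved via exactly your decomposition --- so your argument is essentially a self-contained inlining of that lemma, at the cost of having to treat the degenerate case $\mu=L$ separately (which you correctly do). The paper's version avoids that case split and keeps the proof to a few lines by leaning on the citation; yours is more elementary and makes visible where the threshold $\tfrac{2}{\mu+L}$ actually comes from.
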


\subsection{Convergence Results}
 The convergence of \Cref{alg:push-pull} will be determined entirely by 3 critical error terms, or distances: 1) agents' decision variables' distances to the optimal point, 2) the consensus of the decision variables, and 3) the convergence of gradient tracking variables. We define these respective  error terms mathematically as follows:
\begin{align}
        \label{eq:optimality-error}
        \normphik{\boldx[k]-\boldx^*}\triangleq\sqrt{\sum_{i=1}^n \phiki \normsq{x_i[k]-x^*}},
\end{align}
where $\boldx[k]=(x_1[k], \dots, x_n[k]),$ $\boldx^*=(x^*, \dots, x^*),$ and the vectors $\phi_k$ satisfy \Cref{lemma:phi-sequence}. 
\begin{equation}
    D(\boldx[k], \phi_k)\triangleq\sqrt{\sum_{j=1}^n\sum_{i=1}^n\phiki\phikj \norm{x_i[k]-x_j[k]}^2},
    \label{eq:consensus-error} 
\end{equation}
\begin{align}
    S(\boldy[k], \pi_k)\triangleq \sqrt{\sum_{i=1}^n [\pi_k]_i \norm{\frac{y_i[k]}{[\pi_k]_i}-\sum_{l=1}^n y_l[k]}^2},
    \label{eq:gradient-tracking-error}
\end{align}
where $\boldy[k]=(y_1[k], \dots, y_n[k])$ and $\pi_k$ satisfy \Cref{lemma:pi-sequence}. We call the term $\normphik{\boldx[k]-\boldx^*}$ the optimality gap, $D(\boldx[k], \phi_k)$ the consensus error, and $(\boldy[k], \pi_k)$ the gradient tracking error. Now, we combine the errors in a single vector as $\bolde[k]=(\normphik{\boldx[k] - \boldx^*}, D(\boldx[k], \phi_k), S(\boldy[k], \pi_k))^\intercal.$    
We aim to show that $\lim_{k\to \infty}\bolde[k]=0$ with a geometric rate. Hence, we want to find some matrix $M(\eta, \lambda)$ with spectral radius $\rho(M(\eta, \lambda)) <1$ such that $\bolde[k+1] \leq M(\eta, \lambda) \bolde[k]$. This will give us the desired geometric rate. With this motivation, we now give the composite relation between the error terms at step $k+1$ and error terms at step $k$. First, define 
\begin{align*}
    \sigma_k &\triangleq \sqrt{1-\frac{\min(\phi_{k+1}) (\min R_k^+)^2}{\max^2 (\phi_k) \mathsf D(\mG_k) \mathsf K(\mG_k)}}\in (0,1),\\
    \tau_k &\triangleq \sqrt{1- \frac{\min^2 (\pi_{k}) (\min C_k^+)^2}{\max^2(\pi_{k}) \max(\pi_{k+1})\mathsf{D}(\mG_k) \mathsf K(\mG_k)}}\in (0,1).
\end{align*}
which are the coefficents of contraction due to $R_k$ and $C_k$ respectively (as defined in \Cref{lemma:R-contraction} and \Cref{lemma:C-contraction}), at time $k$. Notice that $\sigma_k, \tau_k$ are uniformly bounded above by constants less than $1$ due to Assumption~\ref{assumption:compatible_R} and Assumption~\ref{assumption:compatible_C}, and  \Cref{lemma:phi-sequence} and \Cref{lemma:pi-sequence}. Then, also define
\begin{align*}
    r_k \triangleq \sqrt{\frac{1}{\min \pi_k}} + \sqrt{n}, \quad
    \varphi_k \triangleq\sqrt{\frac{1}{\min \phi_k}}.
\end{align*}
Notice that since the entries of $\pi_k$ and $\phi_k$ are bounded above and below uniformly across time, the min and max elements are also bounded uniformly over time. Therefore, we can define $r \triangleq \sup_{k \geq 0} r_k , \varphi \triangleq \sup_{k\geq 0} \varphi_k,  \psi \triangleq \inf_{k \geq 0} \min \pi_k > 0, \sigma \triangleq \sup_{k \geq 0} \sigma_k <1 , \tau \triangleq \sup_{k \geq 0} \tau_k < 1.$ Then, we have the following proposition describing the evolution of the errors.
\begin{proposition}[Composite Relation]
    Let~\Crefrange{assumption:objective-strong-convexity}{assumption:compatible_C} hold and let $\eta < \frac{1}{L} \leq \frac{1}{Ln \psi}$. Then, we have
    \begin{align}
        \bolde[k+1] \leq M(\eta, \lambda) \bolde[k],
        \label{eq:composite-error-relation}
    \end{align}
    where the inequality is elementwise and  $M(\eta, \lambda)$ is equal to
    \begin{align*}
    \begin{bmatrix}
            1-\eta \lambda n \psi \mu & \lambda \varphi \sqrt{n} & \lambda  L^{-1}  \\
            2\lambda & \sigma + 2\lambda \sqrt{n} \varphi & 2 \lambda  L^{-1} \\
            2 \lambda Lr \varphi & Lr\varphi (1+\sigma) + \lambda L r \varphi^2 \sqrt{n} & \tau + \lambda r \varphi 
        \end{bmatrix}.
\end{align*}
    \label{proposition:composite-relation}
\end{proposition}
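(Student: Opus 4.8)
The plan is to establish the three rows of \eqref{eq:composite-error-relation} one at a time, in each case propagating the time-$(k+1)$ quantity backwards through the updates \eqref{eq:update_rule_a}--\eqref{eq:update_rule_c} and re-expressing the result through the three errors at time $k$. Two structural facts are used throughout. A short induction on the convexity of $\mX$, starting from $x_i[0]=z_i[0]\in\mX$, shows $x_i[k],z_i[k]\in\mX$ for all $i,k$, so the projection acts as the identity on the iterates themselves and the decision variables stay in the constraint set. Next, \Cref{lemma:phi-sequence} gives the left-invariance $\phi_{k+1}^\intercal R_k=\phi_k^\intercal$, hence the averaging identity $\sum_i[\phi_{k+1}]_i x_i[k+1]=\sum_j[\phi_k]_j z_j[k]$; together with \Cref{assumption:compatible_C} this gives the tracking identity $g[k]\triangleq\sum_i y_i[k]=\sum_i\nabla f_i(x_i[k])$.

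The workhorse of the whole argument is a per-agent bound on the projected step $\tilde z_j[k]\triangleq\proj{x_j[k]-\eta y_j[k]}$ measured against $x^*$. I would obtain it by inserting the consensus gradient $[\pi_k]_j g[k]$ and then the single-point gradient $[\pi_k]_j\sum_l\nabla f_l(x_j[k])$, so that, using non-expansiveness of the projection, $\tilde z_j[k]$ is compared with $\proj{x_j[k]-\eta[\pi_k]_j\sum_l\nabla f_l(x_j[k])}$: this last map is a genuine projected-gradient operator for the aggregate $nf$ with effective step $\eta[\pi_k]_j$, and $x^*$ is its fixed point because it minimizes $f$ over $\mX$. \Cref{lemma:projected-gd-contraction} applied to $nf$ then contributes the factor $1-\eta[\pi_k]_j n\mu$, while the two insertion errors are controlled by $\eta[\pi_k]_j\norm{y_j[k]/[\pi_k]_j-g[k]}$ and, via \Cref{assumption:objective-smoothness}, by $\eta[\pi_k]_j L\sum_l\norm{x_l[k]-x_j[k]}$. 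The outcome is
\begin{align*}
\norm{\tilde z_j[k]-x^*} &\le (1-\eta[\pi_k]_j n\mu)\,\norm{x_j[k]-x^*} \\
&\quad + \eta[\pi_k]_j\Big\|\tfrac{y_j[k]}{[\pi_k]_j}-g[k]\Big\| + \eta[\pi_k]_j L\sum_{l}\norm{x_l[k]-x_j[k]}.
\end{align*}
Anchoring at the fixed point $x^*$ is exactly what circumvents the two difficulties noted in the introduction: the projection never has to be commuted with the mixing matrices, and the non-vanishing of $g[k]$ at the optimum is harmless because it is absorbed into the contraction at $x^*$.

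For the first row I would begin from $\sum_i[\phi_{k+1}]_i\normsq{x_i[k+1]-x^*}$, apply Jensen to the row-stochastic step \eqref{eq:update_rule_a} and the $\phi$-invariance to reduce it to $\sum_j[\phi_k]_j\normsq{z_j[k]-x^*}$, then insert the lazy rule $z_j[k]-x^*=(1-\lambda)(x_j[k]-x^*)+\lambda(\tilde z_j[k]-x^*)$ and the per-agent bound. A Minkowski inequality in the $\phi_k$-weighted $\ell^2$ norm splits the result into the three errors: the lower bound $[\pi_k]_j\ge\psi$ yields the diagonal factor $1-\eta\lambda n\psi\mu$, while $[\pi_k]_j\le1$, a Cauchy--Schwarz passage from $\sum_l\norm{x_l[k]-x_j[k]}$ to $D(\boldx[k],\phi_k)$, and $\eta<1/L$ produce the off-diagonal entries $\lambda\varphi\sqrt n$ and $\lambda L^{-1}$. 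The second row rewrites the consensus error through the variance identity $D(\boldx[k+1],\phi_{k+1})^2=2\sum_i[\phi_{k+1}]_i\normsq{x_i[k+1]-\bar x_{k+1}}$ with $\bar x_{k+1}\triangleq\sum_i[\phi_{k+1}]_i x_i[k+1]$, uses \Cref{lemma:R-contraction} to pass from time $k+1$ to the variables $z_j[k]$ with factor $\sigma_k$, and then the triangle inequality on the lazy rule; the spread of $\{\tilde z_j[k]\}$ is handled by taking $x^*$ as the variance-minimizing reference and re-using the per-agent estimate, producing the contributions $2\lambda$, $\sigma+2\lambda\sqrt n\varphi$, and $2\lambda L^{-1}$.

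The third row is the most involved. Writing \eqref{eq:update_rule_b} as a $C_k$-average plus the increment $\Delta_i\triangleq\nabla f_i(x_i[k+1])-\nabla f_i(x_i[k])$, the triangle inequality in the $\pi_{k+1}$-weighted norm separates $S(\boldy[k+1],\pi_{k+1})$ into a consensus part, to which \Cref{lemma:C-contraction} gives the factor $\tau$, and an increment part. The latter I would bound by the split $\sqrt{\sum_i[\pi_{k+1}]_i\norm{\Delta_i/[\pi_{k+1}]_i-\sum_l\Delta_l}^2}\le r_{k+1}\sqrt{\sum_i\normsq{\Delta_i}}\le r_{k+1}L\sqrt{\sum_i\normsq{x_i[k+1]-x_i[k]}}$, which is where the factor $r$ appears, and it remains to bound the one-step displacement through the decomposition $x_i[k+1]-x_i[k]=(x_i[k+1]-\bar x_{k+1})+(\bar x_{k+1}-\bar x_k)+(\bar x_k-x_i[k])$ around the $\phi$-weighted means $\bar x_k,\bar x_{k+1}$: the first piece is controlled by \Cref{lemma:R-contraction} (contributing the $\sigma$ in the $(3,2)$ entry), the third by the current consensus error, and the mean drift $\bar x_{k+1}-\bar x_k=\lambda\sum_j[\phi_k]_j(\tilde z_j[k]-x_j[k])$ by the per-agent estimate. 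Collecting the three rows and replacing $\sigma_k,\tau_k,r_k,\varphi_k$ and $\min\pi_k$ by their uniform bounds $\sigma,\tau,r,\varphi,\psi$ (using in particular $\sqrt n\le\varphi$ to fold stray $\sqrt n$ factors into $\varphi$) yields $M(\eta,\lambda)$. I expect the main obstacle to be precisely the per-agent estimate and its step-size bookkeeping: keeping the effective step $\eta n[\pi_k]_j$ inside the contraction regime of \Cref{lemma:projected-gd-contraction} for every agent while still extracting the \emph{uniform} factor $1-\eta n\psi\mu$ from the smallest weight is the delicate point, and it is the technical incarnation of the non-linearity of the projection and the non-vanishing optimal gradient that make the constrained case genuinely harder than the unconstrained Push-Pull analysis; the remaining Minkowski and Cauchy--Schwarz passages and the norm conversions are routine once this estimate is in hand.
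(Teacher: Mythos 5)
Your proposal is correct and takes essentially the same route as the paper: the same three-row decomposition (optimality gap, consensus error, gradient-tracking error), the same projected-gradient contraction anchored at the fixed point $x^*$ with agent-dependent effective step $\eta n[\pi_k]_i$ (your per-agent estimate is exactly the paper's intermediate ``perfect-gradient'' iterate $w_i[k]$ written pointwise), and the same reduction of the third row to $\tau_k$-contraction plus the one-step displacement $\norm{\boldx[k+1]-\boldx[k]}_{\boldone}$. The only deviations are organizational---you anchor the spread of the projected points at $x^*$ directly where the paper introduces the cross-terms $r_{ij}[k]$, and you split the displacement around the $\phi$-weighted means rather than as $R_k(\boldz-\boldx)+(R_k\boldx-\boldx)$---which at worst inflates a couple of off-diagonal constants by immaterial factors.
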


\begin{theorem}[Convergence]
\label{theorem:convergence-of-protocol}
    Let~\Crefrange{assumption:objective-strong-convexity}{assumption:compatible_C} hold. Let $0<\eta < \frac{1}{nL}$ and \begin{align*}
    \lambda < \min \left\{\frac{1-\sigma}{2\varphi \sqrt{n}}, \frac{1-\tau}{r\varphi}, \frac{\eta n \psi \mu (1-\sigma)(1-\tau)}{K}\right\},
\end{align*} where
\begin{align*}
    K=
    (1+\eta n \psi \mu) \varphi [2\sqrt{n}(1-\tau)+r(1-\sigma) +2r (1+\sigma)].
\end{align*}
Then, 
\begin{align*}
    \lim_{k\to \infty} \norm{x_i[k]-x^*}=0 \; \text{ for all } i\in\mV,
\end{align*}
where $x^*$ is the solution to problem~\eqref{eq:minimization_goal}.
    Moreover, the convergence rate is geometric with rate $\rho(M(\eta, \lambda))<1,$ where $\rho(\cdot)$ denotes the spectral radius of a matrix.
\end{theorem}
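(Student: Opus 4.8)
The plan is to read \Cref{proposition:composite-relation} as a linear recursion for the nonnegative error vector $\bolde[k]$ and to collapse the entire statement onto the single spectral claim $\rho(M(\eta,\lambda))<1$. First I would note that the theorem's range $0<\eta<\frac{1}{nL}$ implies $\eta<\frac1L$, so the hypotheses of \Cref{proposition:composite-relation} hold and the recursion $\bolde[k+1]\le M(\eta,\lambda)\bolde[k]$ is available. Under the stated step sizes the matrix $M(\eta,\lambda)$ has nonnegative entries (in particular $1-\eta\lambda n\psi\mu>0$) and $\bolde[k]\ge 0$ componentwise, so iterating the elementwise inequality gives $\bolde[k]\le M(\eta,\lambda)^k\,\bolde[0]$. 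If $\rho(M(\eta,\lambda))<1$, then $M(\eta,\lambda)^k\to 0$ geometrically at any rate strictly between $\rho(M(\eta,\lambda))$ and $1$ (via Gelfand's formula, or the Jordan form), so every coordinate of $\bolde[k]$ decays geometrically. Since the first coordinate is $\normphik{\boldx[k]-\boldx^*}\ge\sqrt{[\phi_k]_i}\,\norm{x_i[k]-x^*}$ and $[\phi_k]_i$ is uniformly bounded below by \Cref{lemma:phi-sequence}, this yields $\norm{x_i[k]-x^*}\to 0$ for every $i\in\mV$ at the geometric rate $\rho(M(\eta,\lambda))$, which is exactly the conclusion. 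Thus everything reduces to the spectral bound.

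To prove $\rho(M(\eta,\lambda))<1$ I would invoke the standard fact that for a nonnegative matrix $M$ and any strictly positive vector $\delta$ one has $\rho(M)\le\max_i (M\delta)_i/\delta_i$; hence it suffices to exhibit a positive $\delta=(\delta_1,\delta_2,\delta_3)^\intercal$ with $M(\eta,\lambda)\delta<\delta$ componentwise. Writing out the three rows (dividing the first by $\lambda$) this is the system
\begin{align*}
  &\varphi\sqrt{n}\,\delta_2 + L^{-1}\delta_3 < \eta n \psi \mu\,\delta_1, \\
  &2\lambda\delta_1 + 2\lambda L^{-1}\delta_3 < (1-\sigma-2\lambda\sqrt{n}\varphi)\,\delta_2, \\
  &2\lambda Lr\varphi\,\delta_1 + \bigl(Lr\varphi(1+\sigma)+\lambda Lr\varphi^2\sqrt{n}\bigr)\delta_2 < (1-\tau-\lambda r\varphi)\,\delta_3 .
\end{align*}
The first two step-size bounds $\lambda<\frac{1-\sigma}{2\varphi\sqrt n}$ and $\lambda<\frac{1-\tau}{r\varphi}$ are precisely what make the coefficients $1-\sigma-2\lambda\sqrt n\varphi$ and $1-\tau-\lambda r\varphi$ on the right-hand sides strictly positive, which is necessary for any positive $\delta$ to exist and lets me treat the two bottom inequalities as genuine upper bounds on the mixture of the other two coordinates.

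The crux, and the reason the constrained setting is genuinely harder, is the entry in position $(3,2)$, which contains the $\lambda$-free term $Lr\varphi(1+\sigma)$: shrinking $\lambda$ does not attenuate the influence of the consensus error on the gradient-tracking error, because the gradient need not vanish at the optimum. Consequently I cannot drive all off-diagonal couplings to $O(\lambda)$; instead I must choose $\delta_3$ large relative to $\delta_2$ so that $(1-\tau)\delta_3$ dominates $Lr\varphi(1+\sigma)\delta_2$ at $O(1)$, leaving a fixed positive gap, and then absorb the remaining $O(\lambda)$ terms of the second and third inequalities (and the whole first inequality) into that gap. Concretely I would normalize $\delta_2=1$, take $\delta_3$ of order $Lr\varphi(1+\sigma)/(1-\tau)$, and take $\delta_1$ of order $\delta_3/(\eta n\psi\mu)$ so that the first inequality holds with slack; the binding requirement then comes from forcing the $O(\lambda)$ remainders in the last two inequalities below these gaps, which produces an upper bound on $\lambda$ of the form $\eta n\psi\mu(1-\sigma)(1-\tau)$ divided by a sum of terms in $r,\varphi,\sqrt n,(1\pm\sigma),(1-\tau)$. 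Simplifying and collecting these terms is what reproduces the third condition $\lambda<\frac{\eta n\psi\mu(1-\sigma)(1-\tau)}{K}$ with $K$ as defined. Verifying that this single scalar threshold simultaneously clears every $O(\lambda)$ remainder for the chosen $\delta$, i.e. that $M(\eta,\lambda)\delta<\delta$ holds exactly, is the main bookkeeping obstacle; once it is in place, the nonnegative-matrix argument of the first paragraph closes the proof.
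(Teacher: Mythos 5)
Your first paragraph — reducing the theorem to the single claim $\rho(M(\eta,\lambda))<1$ by iterating the elementwise recursion of \Cref{proposition:composite-relation} and using the uniform lower bound on $[\phi_k]_i$ from \Cref{lemma:phi-sequence} to pass from the weighted error to each $\norm{x_i[k]-x^*}$ — is correct and is exactly how the paper closes the argument. Where you diverge is in how the spectral bound is certified. The paper checks that the diagonal entries of $M(\eta,\lambda)$ are below $1$ (this is precisely where the first two conditions on $\lambda$ come from) and then computes $\det(M(\eta,\lambda)-I)$ by cofactor expansion; for a $3\times3$ nonnegative matrix with diagonals below $1$, negativity of this determinant suffices for $\rho(M)<1$, and collecting the terms of the expansion (and discarding a nonpositive $O(\lambda^3)$ contribution $-2\lambda^3 r\varphi(\sqrt{n}\varphi-1)$) is exactly what produces the constant $K$ and the third condition on $\lambda$. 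You instead invoke the positive-vector criterion: find $\delta>0$ with $M(\eta,\lambda)\delta<\delta$. That criterion is sound, and such a $\delta$ exists whenever $\rho(M)<1$, but it certifies the theorem's \emph{stated} range of $\lambda$ only if $\delta$ is chosen essentially as sharply as the Perron vector.

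The genuine gap is that you never verify that the stated threshold $\lambda<\eta n\psi\mu(1-\sigma)(1-\tau)/K$ actually yields $M\delta<\delta$ for your $\delta$; you defer "the main bookkeeping obstacle" and assert without computation that it "reproduces" $K$. For the $\delta$ you sketch, it does not. Take $\delta_2=1$, $\delta_3=c_3Lr\varphi(1+\sigma)/(1-\tau)$ and $\delta_1=c_1(\varphi\sqrt{n}+L^{-1}\delta_3)/(\eta n\psi\mu)$ with slack factors $c_1,c_3>1$ (strict inequalities force $c_1,c_3>1$). The second row of $M\delta<\delta$ reads $2\lambda(\delta_1+L^{-1}\delta_3+\sqrt{n}\varphi)<1-\sigma$, which after substitution becomes an upper bound on $\lambda$ whose denominator contains the term $2c_1c_3\,r\varphi(1+\sigma)$ in place of the $2r\varphi(1+\sigma)$ appearing in $K$ (and the third row imposes further constraints of the same kind). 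Since $c_1c_3>1$, the range of $\lambda$ you can certify this way is strictly smaller than the range claimed in the theorem, so the statement as written is not proved — you would establish convergence only under a more conservative step-size condition. To close the argument you must either optimize $\delta$ jointly over all three row inequalities, which amounts to tracking the Perron vector and is no easier than the determinant computation, or follow the paper and evaluate $\det(M(\eta,\lambda)-I)$ directly to obtain the sufficient condition $\lambda K<\eta n\psi\mu(1-\sigma)(1-\tau)$ with the exact $K$ of the statement.
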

The proof of \Cref{theorem:convergence-of-protocol} is given in \Cref{sec:proof_main_thm}. The proof shows that by choosing $\lambda$ in the specified range, we can make the diagonals of $M$ less than $1$ and $\det (M(\eta, \lambda) - I) < 0$, which are sufficient to show $\rho(M(\eta, \lambda)) < 1$.
\section{ANALYSIS}
\label{sec:analysis}
In this section, at first, we provide all the necessary results for the proof of \Cref{theorem:convergence-of-protocol}. Then, we provide two impossibility results providing insights into our algorithm design and the analysis. Some of the proofs in this section are given in \Cref{sec:appendix}.
\subsection{Bounding Optimality Gap}
\label{sec:bounding_optimality_gap}
We start the analysis of the optimality gap under our algorithm. First, notice that we have $\normphikp{\boldx[k+1]-\boldx^*} \leq \normphik{\boldz[k]-\boldx^*}$ from \Cref{lemma:R-contraction} with $u=x^*$. 
 Hence, we will focus on the analysis of $\normphik{\boldz[k]-\boldx^*}.$ Our strategy is to split the error into two cases: the error we would have if agents had the perfect gradient knowledge and the error coming from the gradient tracking. To represent the case where agents have the perfect gradient knowledge, we define
\begin{align*}
    w_i[k]=(1-\lambda)x_i[k]-\lambda\proj{x_i-\eta n[\pi_k]_i \nabla f(x_i[k])},
\end{align*}
for each agent $i$ where $\nabla f(x_i[k])\triangleq \frac{1}{n}\sum_{l=1}^n \nabla f_l(x_i[k])$ and stack these vectors in the matrix $\boldw[k]$. With this definition, we have $\normphik{\boldz[k]-\boldx^*} \leq \normphik{\boldz[k]-\boldw[k]} +\normphik{\boldw[k]-\boldx^*}$ by the triangular inequality. We establish a bound on the first term with the following lemma.
\begin{proposition}[Bounding Error from Imperfect Gradients]  \label{proposition:imperfect_gradient_error}
    Let \Crefrange{assumption:objective-smoothness}{assumption:compatible_C} hold. Then, we have for all $k\ge0$,
    \begin{align*}
        \normphik{\boldz[k]-\boldw[k]} &\leq \eta \lambda L \varphi_k \sqrt{n} \Dxk \\
        &+\eta \lambda \Syk.
    \end{align*}
\end{proposition}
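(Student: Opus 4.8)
The plan is to bound $\normphik{\boldz[k]-\boldw[k]}$ using the non-expansiveness of the projection together with the gradient tracking property. First I would subtract the two definitions. Since $z_i[k]$ and $w_i[k]$ both contain the term $(1-\lambda)x_i[k]$, that term cancels, leaving only the difference of the two projections scaled by $\lambda$:
\begin{align*}
z_i[k]-w_i[k] = \lambda\left(\proj{x_i[k]-\eta y_i[k]}-\proj{x_i[k]-\eta n[\pi_k]_i\nabla f(x_i[k])}\right).
\end{align*}
Because the projection onto the closed convex set $\mX$ is non-expansive, the shared inner term $x_i[k]$ cancels inside the norm as well, giving the pointwise bound $\norm{z_i[k]-w_i[k]}\le\eta\lambda\norm{g_i}$, where $g_i\triangleq y_i[k]-n[\pi_k]_i\nabla f(x_i[k])$ is the local discrepancy between the tracked and the true (scaled) global gradient. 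It therefore suffices to bound $\sqrt{\sum_{i=1}^n\phiki\normsq{g_i}}$ by $L\varphi_k\sqrt n\,\Dxk+\Syk$.

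The key step is to split $g_i$ into a pure gradient-tracking error and a pure consensus/smoothness error. Writing $n\nabla f(x_i[k])=\sum_{l=1}^n\nabla f_l(x_i[k])$ and inserting $\pm[\pi_k]_i\sum_{l=1}^n y_l[k]$, I would decompose $g_i=g_i^{(1)}+g_i^{(2)}$ with
\begin{align*}
g_i^{(1)}=[\pi_k]_i\left(\frac{y_i[k]}{[\pi_k]_i}-\sum_{l=1}^n y_l[k]\right),\qquad g_i^{(2)}=[\pi_k]_i\sum_{l=1}^n\left(\nabla f_l(x_l[k])-\nabla f_l(x_i[k])\right),
\end{align*}
where the form of $g_i^{(2)}$ relies on the gradient tracking property $\sum_{l=1}^n y_l[k]=\sum_{l=1}^n\nabla f_l(x_l[k])$. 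The triangle inequality for the $\phi_k$-weighted norm then bounds $\sqrt{\sum_i\phiki\normsq{g_i}}$ by the sum of the $\phi_k$-weighted norms of $g^{(1)}$ and $g^{(2)}$, which I treat separately.

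For $g^{(1)}$, since $\phiki$ and $[\pi_k]_i$ are entries of stochastic vectors and hence at most $1$, I would use $\phiki[\pi_k]_i^2\le[\pi_k]_i$ to collapse $\sum_i\phiki\normsq{g_i^{(1)}}$ into exactly $\Syk^2$. For $g^{(2)}$, I would apply the $L$-Lipschitz continuity of each $\nabla f_l$ (\Cref{assumption:objective-smoothness}) to get $\norm{g_i^{(2)}}\le[\pi_k]_iL\sum_{l=1}^n\norm{x_i[k]-x_l[k]}$, then Cauchy--Schwarz on the inner sum (introducing the factor $\sqrt n$), the bound $[\pi_k]_i\le1$, and finally $\normsq{x_i[k]-x_l[k]}\le\varphi_k^2[\phi_k]_l\normsq{x_i[k]-x_l[k]}$ (valid since $[\phi_k]_l\ge\min\phi_k=\varphi_k^{-2}$) to recognize the resulting double sum as $\Dxk^2$. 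This yields $\sqrt{\sum_i\phiki\normsq{g_i^{(2)}}}\le L\varphi_k\sqrt n\,\Dxk$. Combining the two pieces and multiplying through by $\eta\lambda$ gives the claimed inequality.

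The main obstacle is the bookkeeping across the two distinct weight vectors: the gradient-tracking error $\Syk$ lives in the $\pi_k$-weighted norm, whereas the target quantity and the consensus error $\Dxk$ live in the $\phi_k$-weighted norm. The decomposition must be arranged so that the elementary stochasticity facts $\phiki[\pi_k]_i\le1$ and $[\phi_k]_l\ge\min\phi_k$ convert cleanly between the two norms without losing the geometric structure; once this alignment is set up, the remainder is a routine application of non-expansiveness, the gradient tracking identity, and smoothness.
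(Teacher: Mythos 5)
Your proposal is correct and follows essentially the same route as the paper's proof: cancel the $(1-\lambda)x_i[k]$ terms, use non-expansiveness of the projection to reduce to $\eta\lambda\lVert y_i[k]-n[\pi_k]_i\nabla f(x_i[k])\rVert$, then split via the gradient tracking identity into a $\pi_k$-weighted tracking error (controlled by $[\phi_k]_i[\pi_k]_i\le 1$) and a smoothness/consensus term (controlled by $[\pi_k]_i\le 1$, a Cauchy--Schwarz step introducing $\sqrt n$, and $[\phi_k]_l\ge\min\phi_k$). The only cosmetic difference is that the paper applies $\lVert\sum_i u_i\rVert^2\le n\sum_i\lVert u_i\rVert^2$ directly where you use triangle inequality followed by Cauchy--Schwarz, which is the same bound.
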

Next, we define the following terms to capture the contraction due to the lazy update rule \Cref{eq:update_rule_c}:
\begin{align}
    q(\eta, \lambda)= 1-\lambda + \lambda q(\eta), \text{ and} \\
    q_k(\eta, \lambda) = \max_i q(\eta n\piki, \lambda),
\end{align}
where $q(\eta)$ is the contraction we have in the projected gradient method as defined in \Cref{lemma:projected-gd-contraction}. Now, we can derive our main result of the optimality gap:
\begin{lemma}[Optimality Gap Bound]\label{lemma:optimality-gap}
    Let \Crefrange{assumption:objective-strong-convexity}{assumption:compatible_C} hold. Let $\eta< \frac{1}{nL}$ and $\lambda \in (0, 1]$. Then, we have for all $k\ge0$,
    \begin{align*}
        \normphikp{\boldx[k+1]-\boldx^*} \leq q_{k}(\eta, \lambda) \normphik{\boldx[k]-\boldx^*} \\
        +\eta \lambda L \varphi_k \sqrt{n}D(\boldx[k], \phi_k)+\eta \lambda  S(\boldy[k], \pi_k).
    \end{align*}
\end{lemma}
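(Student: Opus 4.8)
The plan is to decompose the optimality gap into a ``perfect-gradient'' term and an ``imperfect-gradient'' term and then dispatch each with results already in hand. First I would invoke the $R_k$-contraction noted just before the statement, namely $\normphikp{\boldx[k+1]-\boldx^*}\le\normphik{\boldz[k]-\boldx^*}$ (this is \Cref{lemma:R-contraction} with $u=x^*$), so that it suffices to bound $\normphik{\boldz[k]-\boldx^*}$. Inserting the surrogate $\boldw[k]$ and applying the triangle inequality gives
\begin{align*}
    \normphik{\boldz[k]-\boldx^*}\le\normphik{\boldz[k]-\boldw[k]}+\normphik{\boldw[k]-\boldx^*},
\end{align*}
and the first summand is controlled directly by \Cref{proposition:imperfect_gradient_error}, producing exactly the $\eta\lambda L\varphi_k\sqrt{n}\,D(\boldx[k],\phi_k)+\eta\lambda\,S(\boldy[k],\pi_k)$ contribution in the target inequality.

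The core of the argument is bounding the perfect-gradient term $\normphik{\boldw[k]-\boldx^*}$ by $q_k(\eta,\lambda)\normphik{\boldx[k]-\boldx^*}$. Writing $\mT_{\eta n\piki}(x)=\proj{x-\eta n\piki\nabla f(x)}$, each component reads $w_i[k]=(1-\lambda)x_i[k]+\lambda\,\mT_{\eta n\piki}(x_i[k])$. The key observation is that $x^*$ is a fixed point of \emph{every} such operator: since $x^*$ minimizes the convex $f$ over $\mX$, the first-order optimality condition yields $\mT_{\eta n\piki}(x^*)=x^*$ for any positive step size, so I may write $x^*=(1-\lambda)x^*+\lambda\,\mT_{\eta n\piki}(x^*)$. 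Subtracting and taking norms,
\begin{align*}
    \norm{w_i[k]-x^*}\le(1-\lambda)\norm{x_i[k]-x^*}+\lambda\norm{\mT_{\eta n\piki}(x_i[k])-\mT_{\eta n\piki}(x^*)}.
\end{align*}

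To apply \Cref{lemma:projected-gd-contraction} to the second term I must check its step-size hypothesis for the per-agent step size $\eta n\piki$. Since $\pi_k$ is stochastic, $\piki\le1$, and the standing assumption $\eta<\tfrac{1}{nL}$ gives $\eta n\piki<\tfrac1L$; because $\mu\le L$ always holds for a function that is simultaneously $\mu$-strongly convex and $L$-smooth, we have $\tfrac1L\le\tfrac{2}{\mu+L}$, so $\eta n\piki<\tfrac{2}{\mu+L}$ as required. The lemma then gives $\norm{\mT_{\eta n\piki}(x_i[k])-\mT_{\eta n\piki}(x^*)}\le q(\eta n\piki)\norm{x_i[k]-x^*}$, and substituting back produces $\norm{w_i[k]-x^*}\le q(\eta n\piki,\lambda)\norm{x_i[k]-x^*}\le q_k(\eta,\lambda)\norm{x_i[k]-x^*}$ by the definition $q_k(\eta,\lambda)=\max_i q(\eta n\piki,\lambda)$. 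Squaring, weighting by $\phiki$, summing over $i$, and taking the square root upgrades this uniform per-agent contraction to $\normphik{\boldw[k]-\boldx^*}\le q_k(\eta,\lambda)\normphik{\boldx[k]-\boldx^*}$. Chaining the three displayed bounds then yields the claim.

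I expect the main obstacle to be the fixed-point step: recognizing that $x^*$ is a fixed point of each agent's projected-gradient operator independently of the positive step size is precisely what lets the strongly convex contraction factor $q(\eta n\piki)$ appear cleanly, and the accompanying step-size verification (via $\mu\le L$ and $\piki\le1$) is the one place the hypothesis $\eta<\tfrac1{nL}$ is genuinely used. The remainder is bookkeeping with the $\phi_k$-weighted norm.
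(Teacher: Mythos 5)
Your proof is correct and follows essentially the same route as the paper's: the same $R_k$-contraction reduction, the same triangle-inequality split through $\boldw[k]$ handled by \Cref{proposition:imperfect_gradient_error}, and the same per-agent contraction with step size $\eta n[\pi_k]_i<\tfrac{1}{L}\le\tfrac{2}{\mu+L}$. The only cosmetic difference is that you re-derive the lazy-update contraction (the fixed-point property of $x^*$ plus \Cref{lemma:projected-gd-contraction}) inline, whereas the paper packages exactly that argument as \Cref{corollary:gradient-contraction-with-lazy} and its accompanying remark.
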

This lemma shows that we can control the error contributions coming from the consensus and gradient tracking errors by choosing smaller step sizes $\lambda$ or $\eta$.

\subsection{Bounding Consensus Error}
Similar to the analysis of the optimality error in the previous section, we want to isolate the gradient tracking error. Let $\boldu \in \R^d \times \dots \times \R^d$ ($n$ copies of $\R^d$) and $a \in \R^n$ be a positive stochastic vector. Then, similar to the consensus error $\Dxk$, we can define
\begin{align}
    D(\boldu, a)\triangleq \sqrt{\sum_{i=1}^n \sum_{j=1}^n a_i a_j \normsq{u_i - u_j}}. \label{eq:D-error-definition}
\end{align}
Hence, in light of \Cref{lemma:R-contraction}, notice that $\Dxk \leq \sigma_k D(\boldz[k], \phi_k)$. Then, we isolate the gradient tracking error contained in $D(\boldz[k], \phi_k)$ with the following proposition:
\begin{proposition}[Isolating Gradient Tracking Error]
Let \Crefrange{assumption:graph-connectivity}{assumption:compatible_R} hold. Then, we have for all $k\ge0$,
\begin{align*}
    D(\boldz[k], \phi_k) \leq 2\normphik{\boldz[k]-\boldw[k]} + D(\boldw[k], 
    \phi_k).
\end{align*}
\label{proposition:removing-gradient-error-consensus}
\end{proposition}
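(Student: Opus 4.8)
The plan is to exploit that $D(\cdot,\phi_k)$, as defined in \eqref{eq:D-error-definition}, is a semi-norm on the stacked space $\R^d\times\cdots\times\R^d$, so that a single triangle inequality after an additive split of $\boldz[k]$ does almost all of the work. Writing $\delta_i \triangleq z_i[k]-w_i[k]$, I would first record the elementary identity $z_i[k]-z_j[k] = (w_i[k]-w_j[k]) + (\delta_i-\delta_j)$, valid for every pair $i,j$, which is the only place the particular definition of $\boldw[k]$ enters.

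To justify the triangle inequality, I would observe that $D(\boldu,\phi_k)^2 = \sum_{i,j}\phiki\phikj\normsq{u_i-u_j}$ is the quadratic form of the symmetric, positive semidefinite bilinear form $B(\boldu,\boldv) \triangleq \sum_{i,j}\phiki\phikj\langle u_i-u_j,\, v_i-v_j\rangle$ (positive semidefiniteness is immediate since $\phiki,\phikj>0$ and each summand of $B(\boldu,\boldu)$ is nonnegative). Cauchy--Schwarz for positive semidefinite forms then gives $D(\boldu+\boldv,\phi_k)\le D(\boldu,\phi_k)+D(\boldv,\phi_k)$ in the standard way. Here \Crefrange{assumption:graph-connectivity}{assumption:compatible_R} are used only to guarantee, via \Cref{lemma:phi-sequence}, that $\phi_k$ exists and is a stochastic vector; no further graph structure is needed. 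Applying the triangle inequality with $\boldu=\boldw[k]$ and $\boldv=\boldz[k]-\boldw[k]$ yields
\[
D(\boldz[k],\phi_k) \le D(\boldw[k],\phi_k) + D(\boldz[k]-\boldw[k],\phi_k).
\]

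It then remains to bound the last term by $2\normphik{\boldz[k]-\boldw[k]}$. For this I would insert the crude estimate $\normsq{\delta_i-\delta_j}\le 2\normsq{\delta_i}+2\normsq{\delta_j}$ into the definition of $D$ and collapse the double sum using the stochasticity $\sum_i\phiki=\sum_j\phikj=1$:
\[
D(\boldz[k]-\boldw[k],\phi_k)^2 \le 2\sum_{i,j}\phiki\phikj\big(\normsq{\delta_i}+\normsq{\delta_j}\big) = 4\sum_i\phiki\normsq{\delta_i} = 4\,\normphik{\boldz[k]-\boldw[k]}^2.
\]
Taking square roots and substituting into the displayed triangle inequality gives the claim.

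I do not expect a genuine obstacle; the only points requiring care are the justification of the semi-norm (triangle) property for $D(\cdot,\phi_k)$ and the bookkeeping that reduces the $\phiki\phikj$ double sum to a single $\phi_k$-weighted sum. I would also remark that the constant $2$ is not tight: using the variance identity $D(\boldu,\phi_k)^2 = 2\sum_i\phiki\normsq{u_i-\hat u}$ with weighted mean $\hat u=\sum_i\phiki u_i$, together with the fact that this mean minimizes the $\phi_k$-weighted sum of squared deviations, would replace the factor $2$ by $\sqrt{2}$; the coarser constant is retained only to avoid introducing the weighted mean.
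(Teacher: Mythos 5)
Your proof is correct and takes essentially the same route as the paper: the paper inserts $z_i[k]-w_i[k]+w_i[k]-w_j[k]+w_j[k]-z_j[k]$ into the double sum and applies the triangle inequality in the $\phiki\phikj$-weighted $\ell_2$ space, with the factor $2$ coming from the symmetry of the two $\delta$-terms and $\sum_{j}\phikj=1$ --- which is exactly the content of your seminorm triangle inequality combined with the estimate $\normsq{\delta_i-\delta_j}\le 2\normsq{\delta_i}+2\normsq{\delta_j}$. Your closing remark that the constant could be sharpened to $\sqrt{2}$ via the variance identity is also correct (the paper uses that identity elsewhere, e.g.\ in bounding $\norm{\boldx[k+1]-\boldx[k]}_{\boldone}$, but keeps the cruder factor $2$ here).
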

We already have a bound on the term $\normphik{\boldz[k]-\boldw[k]}$ from \Cref{proposition:imperfect_gradient_error}. Therefore, we can complete the consensus error analysis by analyzing consensus under global gradient knowledge, which is captured by the term $D(\boldw[k], \phi_k)$.
\begin{proposition}\label{proposition:w-consensus-bound}
Let~\Crefrange{assumption:objective-strong-convexity}{assumption:compatible_C} hold. Let $\eta<~\frac{1}{nL}$ and $\lambda \in (0, 1]$. Then, we have for all $k\ge0$,
\begin{align*}
    D(\boldw[k], \phi_k)&\leq q_{k}(\eta, \lambda) \Dxk \\
    &+ 2 \lambda q_k(\eta, 1) \normphik{\boldx[k]-\boldx^*}.
\end{align*}
\end{proposition}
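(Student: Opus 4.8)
The plan is to identify $w_i[k]$ as the image of $x_i[k]$ under the idealized lazy-update map that uses the exact scaled gradient, and then exploit a fixed-point property of $x^*$ to absorb the mismatch between the maps used by different agents into the optimality gap. For each $i$, define $g_i(x) \triangleq (1-\lambda)x + \lambda \mT_{\eta n \piki}(x)$, so that $w_i[k] = g_i(x_i[k])$. Since $\piki \le 1$ and $\eta < \frac{1}{nL}$, the effective step size satisfies $\eta n \piki < \frac{1}{L} < \frac{2}{\mu + L}$, so \Cref{lemma:projected-gd-contraction} applies to $f = \frac{1}{n}\sum_l f_l$ (which is $\mu$-strongly convex and $L$-smooth as an average of such functions) and shows that $\mT_{\eta n \piki}$ is a contraction with factor $q(\eta n \piki)$; hence $g_i$ is a contraction with factor $q(\eta n \piki, \lambda) \le q_k(\eta, \lambda)$.

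The key observation is that $x^*$ is a fixed point of $\mT_\alpha$ for \emph{every} $\alpha > 0$: the first-order optimality condition for~\eqref{eq:minimization_goal} is precisely $x^* = \proj{x^* - \alpha \nabla f(x^*)}$. Thus $\mT_{\eta n \piki}(x^*) = x^*$ for all $i$, and consequently $g_i(x^*) = x^*$ for all $i$. This common anchor is exactly what lets us compare the heterogeneous maps $g_i$ and $g_j$, whose effective step sizes $\eta n \piki$ and $\eta n [\pi_k]_j$ differ.

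With this in hand, I would bound each pairwise term through the split
\begin{align*}
    w_i[k] - w_j[k] = \big(g_i(x_i[k]) - g_i(x_j[k])\big) + \big(g_i(x_j[k]) - g_j(x_j[k])\big).
\end{align*}
The first bracket is controlled by the contraction of $g_i$, giving $q_k(\eta, \lambda)\norm{x_i[k] - x_j[k]}$. The second bracket equals $\lambda\big(\mT_{\eta n \piki}(x_j[k]) - \mT_{\eta n [\pi_k]_j}(x_j[k])\big)$; inserting $x^* = \mT_{\eta n \piki}(x^*) = \mT_{\eta n [\pi_k]_j}(x^*)$ and using the triangle inequality together with the contraction of each $\mT$ yields the bound $2\lambda q_k(\eta, 1)\norm{x_j[k] - x^*}$, where $q_k(\eta, 1) = \max_l q(\eta n [\pi_k]_l)$. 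Combining the two, the pairwise estimate becomes
\begin{align*}
    \norm{w_i[k] - w_j[k]} \le q_k(\eta, \lambda)\norm{x_i[k] - x_j[k]} + 2\lambda q_k(\eta, 1)\norm{x_j[k] - x^*}.
\end{align*}

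Finally, I would substitute this estimate into the definition~\eqref{eq:D-error-definition} of $D(\boldw[k], \phi_k)$ and apply Minkowski's inequality in the $\phi_k \otimes \phi_k$-weighted $\ell^2$ space to separate the two contributions. The consensus part reproduces $q_k(\eta, \lambda) D(\boldx[k], \phi_k)$ immediately. For the optimality part, since $\phi_k$ is stochastic the double sum $\sum_{i,j} \phiki \phikj \norm{x_j[k] - x^*}^2$ collapses to $\sum_j \phikj \norm{x_j[k] - x^*}^2 = \normphik{\boldx[k] - \boldx^*}^2$, yielding the claimed term $2\lambda q_k(\eta, 1)\normphik{\boldx[k] - \boldx^*}$. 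I expect the main obstacle to be conceptual rather than computational: because the agents effectively run projected gradient steps with different step sizes $\eta n \piki$, the vector $\boldw[k]$ is \emph{not} the image of a single contraction of $\boldx[k]$, and the crux of the argument is showing that the fixed-point property of $x^*$ converts this step-size heterogeneity into a term controlled by the optimality gap.
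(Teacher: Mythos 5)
Your proposal is correct and follows essentially the same route as the paper: your map $g_i$ and the split $w_i[k]-w_j[k]=\bigl(g_i(x_i[k])-g_i(x_j[k])\bigr)+\bigl(g_i(x_j[k])-g_j(x_j[k])\bigr)$ is exactly the paper's auxiliary point $r_{ij}[k]$, the first bracket is handled by the same lazy-update contraction (\Cref{corollary:gradient-contraction-with-lazy}), and the second is absorbed into the optimality gap via the same fixed-point property $x^*=\mT_\alpha(x^*)$ and a Minkowski step in the $\phi_k$-weighted norm. No gaps.
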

Now, we can combine \crefrange{proposition:imperfect_gradient_error}{proposition:w-consensus-bound} to obtain the final bound for $\Dxkp$.
\begin{lemma}[Consensus Error Bound] 
    \label{lemma:consensus-error}
Let~\Crefrange{assumption:objective-strong-convexity}{assumption:compatible_C} hold. Let $\eta< \frac{1}{nL}$ and $\lambda \in (0, 1]$. Then, we have for $k\ge0$,
    \begin{align*}
        \Dxkp \leq 2 \lambda\sigma_k q_k(\eta, 1) &\normphik{\boldx[k]-\boldx^*} \\+ (\sigma_kq_{k}(\eta, \lambda) + 2\eta \lambda \sigma_k L  \varphi_k\sqrt{n}) &\Dxk \\ + 2\eta \lambda \sigma_k &\Syk.
    \end{align*}
\end{lemma}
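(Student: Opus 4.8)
The plan is to obtain the bound on $\Dxkp$ by chaining together the three propositions already proved, after first contracting the problem from $\boldx[k+1]$ down to the auxiliary variable $\boldz[k]$. First I would invoke \Cref{lemma:R-contraction} with the pair of stochastic vectors $(\phi_k, \phi_{k+1})$ satisfying $\phi_{k+1}^\intercal R_k = \phi_k^\intercal$. Since the consensus update \eqref{eq:update_rule_a} gives $x_i[k+1] = \sum_{j=1}^n [R_k]_{ij} z_j[k]$, the lemma yields the contraction $\Dxkp \leq \sigma_k D(\boldz[k], \phi_k)$, which transfers the entire analysis onto the $\phi_k$-weighted dispersion of $\boldz[k]$ at time $k$.

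Next I would decompose $D(\boldz[k], \phi_k)$ via \Cref{proposition:removing-gradient-error-consensus}, giving $D(\boldz[k], \phi_k) \leq 2\normphik{\boldz[k]-\boldw[k]} + D(\boldw[k], \phi_k)$. This separates the dispersion of $\boldz[k]$ into an imperfect-gradient term and a consensus-under-perfect-gradient term, both of which are already controlled. I would then substitute the bound from \Cref{proposition:imperfect_gradient_error}, namely $\normphik{\boldz[k]-\boldw[k]} \leq \eta\lambda L \varphi_k\sqrt{n}\,\Dxk + \eta\lambda\,\Syk$, together with the bound from \Cref{proposition:w-consensus-bound}, namely $D(\boldw[k], \phi_k) \leq q_k(\eta,\lambda)\Dxk + 2\lambda q_k(\eta,1)\normphik{\boldx[k]-\boldx^*}$. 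This expresses an upper bound for $D(\boldz[k], \phi_k)$ purely in terms of the three error quantities $\normphik{\boldx[k]-\boldx^*}$, $\Dxk$, and $\Syk$.

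The final step is to multiply through by $\sigma_k$ and collect the coefficient of each error term. The factor of $2$ from the decomposition multiplies the imperfect-gradient bound, so the $\Dxk$ coefficient combines the $q_k(\eta,\lambda)$ contributed by the perfect-gradient consensus bound with $2\eta\lambda L\varphi_k\sqrt{n}$, producing $\sigma_k q_k(\eta,\lambda) + 2\eta\lambda\sigma_k L\varphi_k\sqrt{n}$. The remaining coefficients become $2\lambda\sigma_k q_k(\eta,1)$ for $\normphik{\boldx[k]-\boldx^*}$ and $2\eta\lambda\sigma_k$ for $\Syk$, which is precisely the claimed inequality.

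Because every ingredient is in place, this argument is essentially bookkeeping and no single step poses a genuine analytical difficulty; the substance of the work lives in the component propositions. The one point requiring care is the consistency of the weights across the time-varying index shift: \Cref{lemma:R-contraction} must be applied with the pair $(\phi_k,\phi_{k+1})$ rather than a fixed vector, so that the contracted dispersion on the left is measured with $\phi_{k+1}$ (matching $\Dxkp$) while the dispersion on the right is measured with $\phi_k$ (matching the time-$k$ inputs of the three propositions). Keeping this index alignment correct throughout the substitution is the only place an error could realistically arise.
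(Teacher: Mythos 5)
Your proposal is correct and follows exactly the same route as the paper's proof: contract via \Cref{lemma:R-contraction} to get $\sigma_k D(\boldz[k],\phi_k)$, split with \Cref{proposition:removing-gradient-error-consensus}, and substitute the bounds from \Cref{proposition:imperfect_gradient_error} and \Cref{proposition:w-consensus-bound}. The coefficient bookkeeping matches the stated inequality, so there is nothing to add.
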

The error contribution from the optimality gap and gradient tracking error can be made small by choosing a small step size $\lambda$. Moreover, the contribution from the consensus error in previous step comes with a contraction coefficient $\sigma_k$ and some additional error which can be made small with $\lambda$.

\subsection{Bounding Gradient Tracking Error}
In this section, we analyze the gradient tracking error $\Sykp$. Recall that
\begin{align*}
    y_i[k+1]= \sum_{j=1}^n [C_k]_{ij} y_j[k] + \nabla f_i (x_i[k+1]) - \nabla f_i(x_i[k]).
\end{align*}
Here, the mixing term $\sum_{j=1}^n [C_k]_{ij} y_j[k]$ helps the agents agree on the direction of $y$-variables, while the $\nabla f_i(x_i[k+1])-\nabla f_i(x_i[k])$ steer the $y$-variables towards the gradient direction. Therefore, we start by isolating the contraction in $\Syk$ coming from the mixing and the error introduced by the gradient update $\nabla f_i(x_i[k+1]) - \nabla f_i(x_i[k])$:
\begin{proposition}\label{proposition:grad-tracking-bound-with-consecutive-terms}
Let \Crefrange{assumption:objective-smoothness}{assumption:graph-connectivity} and Assumption~\ref{assumption:compatible_C}  hold. Then, we have for all $k\ge0$,
    \begin{align*}
        &\Sykp \\
        &\leq \tau_k \Syk +Lr_k\norm{\boldx[k+1]-\boldx[k]}_{\boldone},
    \end{align*}
    where $\boldone$ denotes the all ones vector.
\end{proposition}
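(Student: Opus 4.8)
The plan is to separate the two forces driving the $y$-update — the column-stochastic mixing and the gradient increment — and bound each in the very $\pi_{k+1}$-weighted norm that defines $S(\boldy[k+1],\pi_{k+1})$. Write $v_i[k]\triangleq\sum_{j=1}^n[C_k]_{ij}y_j[k]$ for the mixing term and $g_i[k]\triangleq\nabla f_i(x_i[k+1])-\nabla f_i(x_i[k])$ for the gradient increment, so that $y_i[k+1]=v_i[k]+g_i[k]$. Since $C_k$ is column stochastic (Assumption~\ref{assumption:compatible_C}), $\sum_{l=1}^n v_l[k]=\sum_{l=1}^n y_l[k]$, hence $\sum_{l=1}^n y_l[k+1]=\sum_{l=1}^n y_l[k]+\sum_{l=1}^n g_l[k]$. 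Substituting both identities into the deviation $\frac{y_i[k+1]}{[\pi_{k+1}]_i}-\sum_l y_l[k+1]$ makes it split \emph{exactly} into a mixing part $A_i\triangleq\frac{v_i[k]}{[\pi_{k+1}]_i}-\sum_l y_l[k]$ and a gradient part $B_i\triangleq\frac{g_i[k]}{[\pi_{k+1}]_i}-\sum_l g_l[k]$.

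Next I would apply the triangle (Minkowski) inequality for the $\pi_{k+1}$-weighted Euclidean norm to get
\[
S(\boldy[k+1],\pi_{k+1})\le\Big(\textstyle\sum_i[\pi_{k+1}]_i\normsq{A_i}\Big)^{1/2}+\Big(\textstyle\sum_i[\pi_{k+1}]_i\normsq{B_i}\Big)^{1/2}.
\]
The first term is exactly the left-hand side of the column-stochastic contraction Lemma~\ref{lemma:C-contraction} applied with $C=C_k$, $\pi=\pi_k$, $\pi'=\pi_{k+1}=C_k\pi_k$ (positive and stochastic by Lemma~\ref{lemma:pi-sequence}), $v_i=v_i[k]$, and $\sum_l y_l=\sum_l y_l[k]$; it therefore contracts to $\tau_k\,S(\boldy[k],\pi_k)$, delivering the first term of the claim.

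The remaining — and principal — work is the gradient part $B_i$, where no contraction lemma applies because the $g_i[k]$ are fresh, uncontrolled vectors; I would bound its weighted norm by hand. Splitting $B_i$ once more by the triangle inequality, the term $\big(\sum_i[\pi_{k+1}]_i\normsq{g_i[k]/[\pi_{k+1}]_i}\big)^{1/2}=\big(\sum_i\normsq{g_i[k]}/[\pi_{k+1}]_i\big)^{1/2}\le\frac{1}{\sqrt{\min\pi_{k+1}}}\big(\sum_i\normsq{g_i[k]}\big)^{1/2}$, while the constant-in-$i$ term contributes $\norm{\sum_l g_l[k]}\le\sqrt n\,\big(\sum_i\normsq{g_i[k]}\big)^{1/2}$ by Cauchy–Schwarz (using $\sum_i[\pi_{k+1}]_i=1$). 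This produces the constant $\frac{1}{\sqrt{\min\pi_{k+1}}}+\sqrt n$, which is $r_k$ in form. Finally, $L$-Lipschitzness of each $\nabla f_i$ (Assumption~\ref{assumption:objective-smoothness}) gives $\norm{g_i[k]}\le L\norm{x_i[k+1]-x_i[k]}$, so $\big(\sum_i\normsq{g_i[k]}\big)^{1/2}\le L\norm{\boldx[k+1]-\boldx[k]}_{\boldone}$, yielding the second term $L r_k\norm{\boldx[k+1]-\boldx[k]}_{\boldone}$.

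I expect the gradient-part estimate to be the main obstacle: it is the only step not supplied by an existing lemma, and care is needed both with the weight-$1/[\pi_{k+1}]_i$ denominators and with extracting the clean constant. One honest caveat is that this route naturally produces $\min\pi_{k+1}$ rather than the $\min\pi_k$ appearing in $r_k$; this is harmless downstream because the convergence analysis ultimately uses the uniform constant $r=\sup_k r_k$, which dominates the factor at every step, so I would simply note this and absorb the discrepancy into $r$.
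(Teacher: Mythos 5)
Your proposal is correct and follows essentially the same route as the paper's proof: the same split of $y_i[k+1]$ into the mixed term $v_i[k]$ handled by Lemma~\ref{lemma:C-contraction} and the gradient increment handled by the triangle inequality, a $1/\sqrt{\min\pi_{k+1}}$ weight bound, a $\sqrt{n}$ Cauchy--Schwarz bound, and $L$-Lipschitzness. Even the $\min\pi_{k+1}$-versus-$\min\pi_k$ caveat you flag is present in the paper itself (the proof redefines $r_k$ with $\min\pi_{k+1}$) and is absorbed into $r=\sup_k r_k$ exactly as you suggest.
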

Now, we have established that the agreement in the $y$-variables (i.e., $\Syk$) can be distorted by $\norm{\boldx[k+1]-\boldx[k]}_\boldone$. This is because as the $x$-variables change, the gradient evaluated at the previous location becomes less relevant. Hence, we now bound the error coming from this term:
\begin{proposition}\label{proposition:consecutive-terms-bound}
Let~\Crefrange{assumption:objective-strong-convexity}{assumption:compatible_C} hold. Let $\eta< \frac{1}{nL}$ and $\lambda \in (0, 1]$. Then, we have for all $k\ge0$,
    \begin{align*}
    \norm{\boldx[k+1]-\boldx[k]}_{\boldone} \leq \lambda \varphi_{k+1} (1+q_{k}(\eta, 1)) &\normphik{\boldx[k]-\boldx^*} \\
    + \left[\frac{1}{\sqrt{2}} \left( \varphi_k + \sigma_k \varphi_{k+1}\right)+ \eta \lambda L \varphi_k \varphi_{k+1} \sqrt{n}\right] &\Dxk\\
    +\eta \lambda \varphi_{k+1} &\Syk.
\end{align*}
\end{proposition}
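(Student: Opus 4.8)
The plan is to exploit the lazy‑update structure $z_j[k] = x_j[k] + \lambda\,g_j[k]$, where I set $g_j[k]\triangleq \proj{x_j[k]-\eta y_j[k]}-x_j[k]$. Substituting into the consensus step \Cref{eq:update_rule_a} and using that $R_k$ is row stochastic ($\sum_j[R_k]_{ij}=1$), I would split the increment as
\begin{align*}
x_i[k+1]-x_i[k]=\Big(\sum_{j=1}^n[R_k]_{ij}x_j[k]-x_i[k]\Big)+\lambda\sum_{j=1}^n[R_k]_{ij}g_j[k],
\end{align*}
and bound the two pieces separately in the $\boldone$‑norm by the triangle inequality. The first, purely consensus piece will yield the $\lambda$‑free part of the $\Dxk$ coefficient; the $\lambda$‑scaled second piece will yield the optimality‑gap and gradient‑tracking contributions together with the remaining $\Dxk$ term.

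For the consensus piece, let $\hat x\triangleq\sum_l \phi_{k,l}\,x_l[k]$ be the $\phi_k$‑weighted average; since $\phi_{k+1}^\intercal R_k=\phi_k^\intercal$ (\Cref{lemma:phi-sequence}), $\hat x$ is simultaneously the $\phi_{k+1}$‑weighted average of $\big(\sum_j[R_k]_{ij}x_j[k]\big)_i$. Inserting $\pm\hat x\boldone$ and applying the triangle inequality, I would bound the mixed term $\norm{\sum_j[R_k]_{\cdot j}x_j[k]-\hat x\boldone}$ via \Cref{lemma:R-contraction} (contraction factor $\sigma_k$) and the term $\norm{\boldx[k]-\hat x\boldone}$ directly. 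Converting each $\phi$‑weighted quantity to the $\boldone$‑norm costs a factor $\varphi_{k+1}$ or $\varphi_k$, respectively, and the variance identity $\sum_i\phi_{k,i}\normsq{x_i[k]-\hat x}=\tfrac12\Dxk^2$ turns the dispersion into $\tfrac{1}{\sqrt2}\Dxk$, giving exactly the $\tfrac{1}{\sqrt2}(\varphi_k+\sigma_k\varphi_{k+1})\Dxk$ term.

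For the $\lambda$‑scaled piece, I would first pass to the $\phi_{k+1}$‑norm via $\norm{\cdot}_{\boldone}\le\varphi_{k+1}\norm{\cdot}_{\phi_{k+1}}$, then apply Jensen to the row‑stochastic average and again invoke $\phi_{k+1}^\intercal R_k=\phi_k^\intercal$ to collapse the mixing into $\lambda\varphi_{k+1}\norm{\mathbf g[k]}_{\phi_k}$, where $\mathbf g[k]=(g_1[k],\dots,g_n[k])$. It remains to bound $\norm{\mathbf g[k]}_{\phi_k}$. Introducing the perfect‑gradient point $\proj{x_j[k]-\eta n\piki\nabla f(x_j[k])}$ (the $\lambda=1$ analogue of $\boldw[k]$), I would write $g_j[k]$ as a sum of three differences: (i) the imperfect‑gradient error $\proj{x_j[k]-\eta y_j[k]}-\proj{x_j[k]-\eta n\piki\nabla f(x_j[k])}$; (ii) $\proj{x_j[k]-\eta n\piki\nabla f(x_j[k])}-x^*$; and (iii) $x^*-x_j[k]$. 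For (ii), the optimality fixed‑point relation $\proj{x^*-\alpha\nabla f(x^*)}=x^*$ recasts the difference as $\mT_{\eta n\piki}(x_j[k])-\mT_{\eta n\piki}(x^*)$, so \Cref{lemma:projected-gd-contraction} applies (valid since $\eta n\piki\le\eta n<\tfrac1L\le\tfrac{2}{\mu+L}$) with factor $q_k(\eta,1)$; combined with (iii) this yields $(1+q_k(\eta,1))\normphik{\boldx[k]-\boldx^*}$. For (i), nonexpansiveness of the projection bounds it by $\eta\norm{y_j[k]-n\piki\nabla f(x_j[k])}$; splitting this, via the tracking identity $\sum_l y_l[k]=\sum_l\nabla f_l(x_l[k])$, into the residual $\piki\big(\tfrac{y_j[k]}{\piki}-\sum_l y_l[k]\big)$ and the term $\piki\sum_l(\nabla f_l(x_l[k])-\nabla f_l(x_j[k]))$, then using $L$‑smoothness, Cauchy–Schwarz, and $\piki,\phiki\le1$, produces $\eta\Syk+\eta L\sqrt n\,\varphi_k\Dxk$ after taking the $\phi_k$‑weighted norm. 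Multiplying the collected bound by $\lambda\varphi_{k+1}$ reproduces all three claimed coefficients.

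The hard part will be the bookkeeping in bounding $\norm{\mathbf g[k]}_{\phi_k}$: one must simultaneously track the two distinct stochastic weightings $\phi_k$ and $\pi_k$, match the per‑agent step sizes $\eta n\piki$ to the hypotheses of \Cref{lemma:projected-gd-contraction}, and carefully handle the mismatch that $\nabla f(x_j[k])$ evaluates every local gradient at the single point $x_j[k]$ whereas the tracked sum $\sum_l y_l[k]=\sum_l\nabla f_l(x_l[k])$ evaluates them at distinct points $x_l[k]$. It is precisely this mismatch that forces the extra consensus‑error term, which must be absorbed cleanly through smoothness; and it is passing the mixing $R_k$ through the identity $\phi_{k+1}^\intercal R_k=\phi_k^\intercal$, rather than crudely bounding the row‑stochastic average, that keeps the constants tight and generates the $\varphi_{k+1}$ factors.
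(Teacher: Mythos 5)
Your proposal is correct and follows essentially the same route as the paper's proof: the same split of $x_i[k+1]-x_i[k]$ into a pure-mixing term (handled via \Cref{lemma:R-contraction} and the variance identity for $\Dxk$) and a $\lambda$-scaled term collapsed through $\phi_{k+1}^\intercal R_k=\phi_k^\intercal$, with the latter bounded by inserting the perfect-gradient point and $x^*$ exactly as in \Cref{proposition:imperfect_gradient_error} and \Cref{lemma:projected-gd-contraction}. The only cosmetic differences are that you factor $\lambda$ out of $z_j[k]-x_j[k]$ up front and re-derive the bound of \Cref{proposition:imperfect_gradient_error} inline rather than citing it (and note a stray index: the per-agent step size attached to agent $j$ should be $\eta n[\pi_k]_j$, not $\eta n[\pi_k]_i$).
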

Finally, we combine the results in \Cref{proposition:grad-tracking-bound-with-consecutive-terms} and \Cref{proposition:consecutive-terms-bound} to get the bound for $\Sykp$.
\begin{lemma}[Gradient Tracking Error Bound]
\label{lemma:gradient-tracking}
Let~\Crefrange{assumption:objective-strong-convexity}{assumption:compatible_C} hold. Let $\eta< \frac{1}{nL}$ and $\lambda \in (0, 1]$. Then, we have for all $k\ge0$,
    \begin{align*}
        &\Sykp\\
        &\leq \lambda Lr_k \varphi_{k+1} (1+q_{k}(\eta, 1)) \normphik{\boldx[k]-\boldx^*}\\
        &+Lr_k \left[ \left( \varphi_k + \sigma_k \varphi_{k+1}\right)+ \eta \lambda L \varphi_k \varphi_{k+1} \sqrt n \right] \Dxk \\
        &+ \left(\tau_k + \eta \lambda L r_k \varphi_{k+1}  \right)\Syk.
    \end{align*}
\end{lemma}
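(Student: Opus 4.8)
The final statement to prove is \Cref{lemma:gradient-tracking}, the Gradient Tracking Error Bound. The plan is to combine the two preceding propositions in a straightforward algebraic chaining, substituting the bound from \Cref{proposition:consecutive-terms-bound} into the bound from \Cref{proposition:grad-tracking-bound-with-consecutive-terms}. The structure of the argument is dictated entirely by these two results, so the proof is essentially a bookkeeping exercise of tracking coefficients.

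First I would start from \Cref{proposition:grad-tracking-bound-with-consecutive-terms}, which gives
\begin{align*}
    \Sykp \leq \tau_k \Syk + Lr_k \norm{\boldx[k+1]-\boldx[k]}_{\boldone}.
\end{align*}
Then I would substitute the bound for $\norm{\boldx[k+1]-\boldx[k]}_{\boldone}$ from \Cref{proposition:consecutive-terms-bound} directly into the second term. This yields a linear combination of the three error terms $\normphik{\boldx[k]-\boldx^*}$, $\Dxk$, and $\Syk$. After distributing the factor $Lr_k$ across the three-term bound and collecting the resulting coefficients, the coefficient of $\normphik{\boldx[k]-\boldx^*}$ becomes $\lambda L r_k \varphi_{k+1}(1+q_k(\eta,1))$; the coefficient of $\Dxk$ becomes $Lr_k[(\varphi_k + \sigma_k \varphi_{k+1}) + \eta \lambda L \varphi_k \varphi_{k+1}\sqrt{n}]$; and the coefficient of $\Syk$ combines the pre-existing $\tau_k$ with the newly generated $Lr_k \cdot \eta \lambda \varphi_{k+1}$ to give $\tau_k + \eta \lambda L r_k \varphi_{k+1}$. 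These match the three coefficients in the lemma statement exactly.

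The only subtlety to watch for is the $\Dxk$ coefficient: in \Cref{proposition:consecutive-terms-bound} the leading consensus factor is $\frac{1}{\sqrt{2}}(\varphi_k + \sigma_k \varphi_{k+1})$, whereas the target lemma drops the $\frac{1}{\sqrt{2}}$ factor. Since $\frac{1}{\sqrt{2}} < 1$, replacing $\frac{1}{\sqrt{2}}(\varphi_k + \sigma_k \varphi_{k+1})$ by the larger quantity $(\varphi_k + \sigma_k \varphi_{k+1})$ only loosens the upper bound, so the inequality is preserved; I would note this step explicitly to justify the simplified form. Beyond this single relaxation, the proof is a direct substitution with no genuine obstacle — the hard analytical work was already done in establishing the two propositions (the contraction via \Cref{lemma:C-contraction} and the triangle-inequality decomposition of the increment $\norm{\boldx[k+1]-\boldx[k]}_{\boldone}$). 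All hypotheses ($\eta < \frac{1}{nL}$, $\lambda \in (0,1]$, and \Crefrange{assumption:objective-strong-convexity}{assumption:compatible_C}) are exactly those required by the two propositions being invoked, so no additional conditions need to be verified.
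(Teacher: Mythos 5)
Your proposal is correct and matches the paper's proof, which likewise obtains \Cref{lemma:gradient-tracking} by substituting the bound of \Cref{proposition:consecutive-terms-bound} into \Cref{proposition:grad-tracking-bound-with-consecutive-terms} and collecting coefficients. Your explicit note that replacing $\frac{1}{\sqrt{2}}(\varphi_k + \sigma_k\varphi_{k+1})$ by $(\varphi_k + \sigma_k\varphi_{k+1})$ only loosens the bound is the same (implicit) relaxation the paper performs, so nothing is missing.
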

Similar to the consensus error, we get contraction in the gradient tracking error with coefficient $\tau_k$. All the other errors can be made small by choosing a smaller step size $\lambda$. This completes all the necessary results needed to establish \Cref{proposition:composite-relation}. Using \Cref{lemma:optimality-gap}, \Cref{lemma:consensus-error}, and \Cref{lemma:gradient-tracking}, and the upper bounds on $r_k,\varphi_k,$ etc. (see the paragraph preceding \Cref{proposition:composite-relation}), we obtain the composite relation matrix $M(\eta, \lambda).$

\subsection{Impossibility Results}
In this section, we give some theoretical results highlighting the need for including an extra step size $\lambda$. Consider the case where we set $\lambda=1$ in \Cref{eq:update_rule_c}, thus removing the lazy update, so that
$$z_i[k+1] = \proj{x_i[k+1]-\eta y_i[k+1]}.$$

We will establish that with this update rule, it is not possible to bound the term $\norm{\boldx[k+1]-\boldx[k]}$ such that
\begin{align*}
    \norm{\boldx[k+1]-\boldx[k]}&\leq c_1(\eta) \normphik{\boldx[k]-\boldx^*} \\&+ c_2(\eta) D(\boldx[k], \phi_k) + c_3(\eta) S(\boldy[k], \pi_k),
    \label{eq:consecutive-term-bound-structure}
\end{align*}
where $\lim_{\eta\to 0} c_1(\eta) = 0$ for every configuration of the problem. The term $\boldx[k+1]-\boldx[k]$ is essential for the analysis of the gradient tracking error since it is directly related to the term $\nabla f_i(x_i[k+1])-\nabla f_i(x_i[k])$ by both the L-smoothness and strong convexity. The following result shows that we cannot control the error contribution coming from this term by the optimality error by simply decreasing the step size. This problem persist even when the system consists of a single agent with perfect gradient knowledge, as in centralized projected gradient method.
\begin{lemma} 
    Assume that the function $f(x):\R^d \to \R$ is $\mu$-strongly convex and its gradient $\nabla f(x)$ is $L$-Lipschitz continuous. Moreover, assume that the constraint set $\mX\subseteq \R^d$ is convex and closed. Consider the sequence $\{x[k]\}_{k=1}^\infty$ generated by the centralized projected gradient method:
    \begin{align}
        x[k+1]= \proj{x[k+1]-\eta \nabla f(x[k])},
    \end{align}
    for some $x[0]\in \R^d$. Suppose that there exist a coefficient $c(\eta)$ that depends on $\eta$ such that for all $k\geq 0$ we have
\begin{align}
    \norm{x[k+1]-x[k]}&\leq c(\eta) \norm{x[k]-x^*},
\end{align}
where $x^*$ is the unique minimizer of $f(x)$ over  the set $\mX$. Then, there exists a function $f$ and a constraint set $\mX$ where 
$c(\eta) \geq b$ where $b>0$ for any $x[0]\in \mX \setminus \{x^*\}$ and any $c(\eta)$ with $\eta>0$.
    \label{lemma:pgd_impossibility}
\end{lemma}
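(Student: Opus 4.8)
The plan is to prove the impossibility by exhibiting a single fixed example of $f$ and $\mX$ for which the step magnitude $\norm{x[k+1]-x[k]}$ equals the full distance $\norm{x[k]-x^*}$ at some iteration, no matter how small $\eta$ is. The governing intuition is that when the constrained optimum lies on the boundary of $\mX$ with $\nabla f(x^*)\neq 0$, the projection operator absorbs an entire neighborhood of $x^*$ onto $x^*$ in a single step; within this absorbing region the displacement produced by one iteration is exactly the distance to $x^*$, which decouples the step size $\eta$ from the step magnitude and forbids the ratio $\norm{x[k+1]-x[k]}/\norm{x[k]-x^*}$ from vanishing with $\eta$.

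Concretely, I would take $d=1$, $\mX=[0,\infty)$, and $f(x)=\frac{1}{2}(x+1)^2$, which is $1$-strongly convex and $1$-smooth with constrained minimizer $x^*=0$ on the boundary and $\nabla f(x^*)=1\neq 0$. The iteration map becomes $g(x)=\proj{x-\eta(x+1)}=\max\{0,(1-\eta)x-\eta\}$, whose unique fixed point in $\mX$ is $x^*=0$. The first step is to identify the \emph{absorbing interval}: for every $x\in(0,w)$ with $w\triangleq \frac{\eta}{1-\eta}$ (and, when $\eta\ge 1$, for all $x\ge 0$) one has $g(x)=0=x^*$, so a single update lands exactly on the optimum.

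The second step is to show that the trajectory from any $x[0]\in\mX\setminus\{x^*\}$ necessarily realizes a unit ratio. For $\eta\in(0,1)$ and $x>w$ the map is the affine contraction $g(x)=(1-\eta)x-\eta$ with $x^*<g(x)<x$, so the iterates strictly decrease and, being bounded below by $0$ and converging to the fixed point, must enter $(0,w]$ at some first index $k_0$. At that index either $x[k_0]\in(0,w)$, so that $x[k_0+1]=0=x^*$ and $\norm{x[k_0+1]-x[k_0]}=x[k_0]=\norm{x[k_0]-x^*}$, or $x[k_0]=w$, so that again $x[k_0+1]=0$ and the displacement equals $w=\norm{x[k_0]-x^*}$; the degenerate starting cases $x[0]\in(0,w]$, and the entire regime $\eta\ge 1$, are handled identically at $k=0$. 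In every case the ratio equals $1$, so any admissible coefficient must satisfy $c(\eta)\ge 1$, and taking $b=1$ completes the argument for all $\eta>0$.

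I expect the main obstacle to be bookkeeping rather than conceptual: one must verify that the monotone sequence cannot \emph{skip over} the absorbing interval without either entering it or landing exactly on its boundary $w$, which requires checking that $g(x)>0$ for $x>w$ (so the iterates stay strictly positive until the window is reached) together with the edge cases $\eta\ge 1$ and $x[0]$ already inside the window. Once these are in place the unit-ratio step is guaranteed, and the same example simultaneously covers the implicit reading of the update, since for this $f$ and $\mX$ the variational inequality $x[k+1]=\proj{x[k+1]-\eta\nabla f(x[k])}$ forces $x[k+1]=0$ directly.
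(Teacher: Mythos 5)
Your proposal is correct and follows essentially the same route as the paper's proof: a one-dimensional quadratic with a half-line constraint whose minimizer sits on the boundary with nonvanishing gradient, an ``absorbing interval'' adjacent to $x^*$ on which one projected step lands exactly at $x^*$ (forcing a unit ratio), and a monotone-convergence argument showing every trajectory must enter that interval. The paper uses $f(x)=\tfrac{L}{2}x^2$ on $\{x\ge 1\}$ with absorbing set $(1,\tfrac{1}{1-\eta L}]$, which is your example up to translation, so the two arguments are interchangeable.
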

\begin{proof}
    We prove this result by constructing an example. Let $f(x)=\frac{L}{2}x^2$ and $\mX = \{x \in \R \mid x \geq 1\}$. Then, the function $f$ is strongly convex with $L$-Lipschitz continuous gradients and the set $\mX$ is closed and convex. Notice that the update rule in this example is $x[k+1] = \proj{(1-\eta L) x[k]}$ since $\nabla f(x)=Lx$. Let $c(\eta)$ satisfy the following inequality:
    \begin{align*}
        \norm{x[k+1]-x[k]} \leq c(\eta) \norm{x-x^*}.
    \end{align*}
    Let the initial point $x[0]\in \mX \setminus \{x^*\}$. We split the proof into two cases. First, assume that $\eta\geq\frac{1}{L}$. Hence, we have $x[1] = \proj{(1-\eta L) x[0]}=1=x^*.$ Then,
    \begin{align*}
        \norm{x[1]-x[0]}&\leq c(\eta) \norm{x[0]-x^*}\\
        \norm{x^* - x[0]}&\leq c(\eta) \norm{x[0]-x^*}\\
        1&\leq c(\eta).
    \end{align*}
    Since $c(\eta)$ should satisfy the inequality \Cref{eq:impossiblilty_c_eta_ineq} for any $k\geq0$, it must be that $c(\eta)\geq 1$ for any $c(\eta)$. 
    
    Next, we consider the case where $0<\eta<\frac{1}{L}.$ Define the set $\mC_\eta=\{x \in \R \mid 1 < x \leq \frac{1}{1-\eta L}\}.$ For any $x[k]\in \mC_\eta$, we have $(1-\eta L)x[k]\leq 1$ since $x[k] \leq \frac{1}{1-\eta L}$. Therefore, $x[k+1]=x^*.$ Then, using similar steps to the proof with $\eta>\frac{1}{L}$ we have $c(\eta) \geq 1.$ The only remaining part is to show that for any $x[0]\in \mX \setminus \{x^*\}$, there exists an $x[k]\in \mC_\eta$. When $x[0]\in \mC_\eta,$ this is true trivially. Assume that $x[0]\notin \mC_\eta$, i.e., $x[k]>\frac{1}{1-\eta L}.$ First, notice that for any $x[k]\notin \mC_\eta$, $x[k+1]>1.$ We know that $x[k]$ should converge to $x^*=1$ since the iterates follow the projected gradient update rule with $\eta<\frac{1}{L}$ \cite[Chapter 7.2]{polyak1987introduction}. Then, there must be an $x[k]\in \mC_\eta$, which completes the proof.
\end{proof}
\begin{remark}
   Let~\Crefrange{assumption:objective-strong-convexity}{assumption:compatible_C} hold. Let there be a single agent in the system, i.e., $n=1.$ Then, if $\lambda=1$, the Projected Push-Pull algorithm in \Cref{eq:update_rule} is equivalent to the centralized projected gradient descent given in \Cref{lemma:pgd_impossibility}.
\end{remark}
Therefore, the impossibility results that we have shown for the projected gradient method also apply to the Projected Push-Pull algorithm.
\begin{corollary}
    \label{corollary:impossibility_consecutive_x}
    Let \Crefrange{assumption:objective-strong-convexity}{assumption:compatible_C} hold true. Assume that the agents follow the Projected Push-Pull algorithm given in \Cref{eq:update_rule} with $\lambda=1$. Suppose that there exist coefficients $c_1(\eta)$, $c_2(\eta)$, and $c_3(\eta)$ that depend on $\eta$ such that for all $k\geq 0$ we have
\begin{align}
    \label{eq:impossiblilty_c_eta_ineq}
    \norm{x[k+1]-x[k]} &\leq c_1(\eta) \normphik{\boldx[k] - \boldx^*}\\
    &+c_2(\eta)D(\boldx[k], \phi_k)+ c_3(\eta)S(\boldy[k], \pi_k).\nonumber
\end{align}
Then, there exist functions $f_i$, a constraint set $\mX$, and initial points $x_i[0]$ where $c_1(\eta) \geq b$ where $b>0$ for any $c(\eta)$ with $\eta>0$.
\end{corollary}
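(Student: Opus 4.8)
The plan is to reduce the multi-agent statement to the single-agent impossibility result already established in Lemma~\ref{lemma:pgd_impossibility}. The idea is to exhibit a fully symmetric problem instance in which every agent is a copy of the centralized counterexample, so that the consensus error $D(\boldx[k],\phi_k)$ and the gradient-tracking error $S(\boldy[k],\pi_k)$ vanish identically and the hypothesized bound \eqref{eq:impossiblilty_c_eta_ineq} collapses onto the one-dimensional relation treated in the lemma. Concretely, I would take $n$ agents with identical cost functions $f_i=f$ and constraint set $\mX$ equal to the counterexample from Lemma~\ref{lemma:pgd_impossibility} (namely $f(x)=\frac{L}{2}x^2$ and $\mX=\{x\ge 1\}$, with $x^*=1$), a complete communication graph, and uniform doubly-stochastic mixing matrices $R_k=C_k=\frac{1}{n}\boldone\boldone^\intercal$; these satisfy Assumptions~\ref{assumption:graph-connectivity}--\ref{assumption:compatible_C} with $\Rmin=\Cmin=\frac{1}{n}$, and the associated stochastic vectors may be taken as $\phi_k=\pi_k=\frac{1}{n}\boldone$. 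I initialize all agents at a common point $x_i[0]=x_0\in\mX\setminus\{x^*\}$.

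The key step is an invariance argument by induction on $k$: under this symmetric choice the iterates stay perfectly synchronized, i.e. $x_i[k]$ is the same for all $i$, $y_i[k]$ is the same for all $i$, and the common trajectory coincides (up to a one-step delay introduced by the $z$-variable) with the centralized projected-gradient sequence $p_0=x_0$, $p_{k+1}=\proj{p_k-\eta\nabla f(p_k)}$, so that $x_i[k]=p_{k-1}$ for $k\ge1$. The only delicate point is that row stochasticity of $R_k$ preserves synchrony of the $x$-variables automatically, whereas synchrony of the $y$-variables is preserved only because $C_k=\frac{1}{n}\boldone\boldone^\intercal$ is also row stochastic (i.e. doubly stochastic), so that the mixing term $\sum_j [C_k]_{ij}y_j[k]$ has identical row sums across $i$. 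This is precisely why I pick uniform matrices rather than arbitrary column-stochastic ones, and it is the main obstacle to a naive symmetric reduction.

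Granting the invariance, both error terms vanish, which makes the coefficients $c_2(\eta),c_3(\eta)$ irrelevant: $D(\boldx[k],\phi_k)=0$ because the $x_i[k]$ agree, and $S(\boldy[k],\pi_k)=0$ because all $y_i[k]$ are equal and $\pi_k$ is uniform. With $\phi_k=\frac{1}{n}\boldone$ the remaining quantities evaluate, for $k\ge1$, to $\normphik{\boldx[k]-\boldx^*}=\norm{p_{k-1}-x^*}$ and $\norm{\boldx[k+1]-\boldx[k]}=\sqrt{n}\,\norm{p_k-p_{k-1}}$, so \eqref{eq:impossiblilty_c_eta_ineq} reduces to $\sqrt{n}\,\norm{p_k-p_{k-1}}\le c_1(\eta)\,\norm{p_{k-1}-x^*}$ for every $k\ge1$.

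Finally I invoke the construction behind Lemma~\ref{lemma:pgd_impossibility}: for every $\eta>0$ it guarantees the existence of an index $k'$ with $p_{k'}\ne x^*$ and $p_{k'+1}=x^*$ (the first step when $\eta\ge\frac{1}{L}$, and the step out of the set $\mC_\eta$ when $0<\eta<\frac{1}{L}$). Taking $k=k'+1\ge1$, both sides of the reduced inequality carry the same factor $\norm{p_{k'}-x^*}\ne0$, forcing $c_1(\eta)\ge\sqrt{n}$. Setting $b=\sqrt{n}>0$, which is independent of $\eta$, completes the proof; I would also note that this positive constant is merely an artifact of the norm convention on the left-hand side and becomes $b=1$ under the per-agent reading.
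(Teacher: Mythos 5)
Your proposal is correct and follows essentially the same route as the paper's proof: identical cost functions, a complete graph with uniform mixing matrices $[R_k]_{ij}=[C_k]_{ij}=\tfrac{1}{n}$, identical initialization, and reduction to Lemma~\ref{lemma:pgd_impossibility} after observing that $D(\boldx[k],\phi_k)=S(\boldy[k],\pi_k)=0$. You are in fact more careful than the paper on the one genuinely non-trivial point — that synchrony of the $y$-variables is preserved only because the uniform $C_k$ is also row stochastic — and you make explicit the constant $b$ and the one-step delay, both of which the paper leaves implicit.
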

\begin{proof}
    Choose $f_i(x)=f_j(x)$ for all $i,j \in \mV.$ Consider a fully connected graph with $[R_{k}]_{ij}=[C_{k}]_{ij}=\frac{1}{n}$ for all $i,j \in \mV$ and for all $k\geq 0.$ Let each agent initialize the algorithm from the same point, i.e., $x_i[0]=x_j[0]$ for all $i,j \in \mV$. Then, the Projected Push-Pull algorithm is equivalent to following a centralized projected gradient descent for all agents. Moreover, we have $D(\boldx[k], \phi_k)=0$ and $S(\boldy[k], \pi_k)=0.$ Then, by \Cref{lemma:pgd_impossibility}, we know that there exist a function $f_i(x)$ and a constraint set $\mX$ such that $c_1(\eta)\geq b$ where $b>0$ for any $c_1(\eta)$ with $\eta>0.$
\end{proof}
Hence, we have established that we cannot fully control the bound on the term $\norm{\boldx[k+1]-\boldx[k]}$ by simply changing the step size $\eta$. This term has to arise in our analysis due to the definition of gradient tracking, which poses an important challenge to analyzing gradient tracking algorithms using projections. However, this problem does not happen when $\mX = \R^d$, i.e., when the problem is unconstrained. The main challenge in the constrained case is that the gradient at $x^*$ is typically non-zero, and therefore, we reach a pathological case where the agents do not slow down as they reach the optimal point. In the unconstrained case, as the agents reach the optimum, gradient also slows down since it vanishes. 

In a similar fashion to \Cref{corollary:impossibility_consecutive_x}, we have a fundamental limitation in the analysis of the consensus error when $\lambda=1$ in the algorithm. The following result shows this limitation.

\begin{lemma}
    \label{lemma:impossiblilty_consensus_bound}
    Let \Crefrange{assumption:objective-strong-convexity}{assumption:compatible_C} hold true. Assume that $n=2$, i.e. there are two agents in the system. Let the agents follow the Projected Push-Pull algorithm given in \Cref{eq:update_rule} with $\lambda=1$. Suppose that there exist coefficients $c_1(\eta)$, $c_2(\eta)$, and $c_3(\eta)$ that depend on $\eta$ such that for all $k\geq 0$ we have
\begin{align}
    D(\boldx[k+1], \phi_k) &\leq c_1(\eta) \normphik{\boldx[k] - \boldx^*}\\
    &+c_2(\eta)D(\boldx[k], \phi_k)+ c_3(\eta)S(\boldy[k], \pi_k).\nonumber
\end{align}
Then, there exist functions $f_1$, $f_2$, a constraint set $\mX$, and initial points $x_1[0]$ and $x_2[0]$ such that $\lim_{\eta \to 0} c_1(\eta) \neq 0$ for any $c_1(\eta)$.
\end{lemma}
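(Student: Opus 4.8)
The plan is to prove the lemma by exhibiting an explicit two-agent counterexample, in direct analogy with \Cref{lemma:pgd_impossibility} and \Cref{corollary:impossibility_consecutive_x}. The single indispensable new ingredient is that the communication graph must be genuinely \emph{directed}: I would take a constant column-stochastic matrix $C$ whose Perron vector $\pi^*$ (with $C\pi^*=\pi^*$) is non-uniform, $\pi^*_1\neq\pi^*_2$, together with a constant row-stochastic matrix $R=\bigl(\begin{smallmatrix}a&1-a\\ b&1-b\end{smallmatrix}\bigr)$ with $a\neq b$ (both with positive entries, hence strongly connected). The reason the direction matters is structural: whenever $S(\boldy[k],\pi_k)=0$ the gradient-tracking variables satisfy $y_i[k]=[\pi_k]_i\sum_l y_l[k]$, so in the symmetric/doubly-stochastic case ($\pi_k=\tfrac1n\boldone$) this forces $y_1[k]=y_2[k]$ and hence $z_1[k]=z_2[k]$, generating \emph{no} consensus error; only a non-uniform $\pi^*$ lets the two agents carry genuinely different $y$-directions while keeping $S=0$, which is precisely what is needed to manufacture consensus error out of the projection. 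For the cost I would use one-dimensional strongly convex smooth quadratics $f_1,f_2$ on the half-line $\mX=\{x\ge 0\}$, chosen so that the averaged minimizer lies strictly to the left of $0$; then $x^*=0$ is a \emph{boundary} optimum with $\nabla f(x^*)>0$, so the projected-gradient ``band'' phenomenon from the proof of \Cref{lemma:pgd_impossibility} is in force.

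The steps, in order, would be: (1) define the example and check \Crefrange{assumption:objective-strong-convexity}{assumption:compatible_C}; (2) argue, exactly as in the proof of \Cref{lemma:pgd_impossibility}, that for each fixed $\eta$ the trajectory is driven toward the boundary optimum and, en route, passes through a configuration where the two agents are (nearly) co-located within the $\Theta(\eta)$-wide critical band just above $x^*$, while $\pi_k$ has relaxed to $\pi^*$ and $S(\boldy[k],\pi_k)$ has (essentially) vanished, so the optimality gap $\normphik{\boldx[k]-\boldx^*}$ at that step is $\Theta(\eta)$; (3) evaluate the lazy update with $\lambda=1$, $z_i[k]=\proj{x_i[k]-\eta[\pi_k]_i G}$ with $G=\nabla f_1+\nabla f_2>0$, and show that because $\pi^*_1\neq\pi^*_2$ the two pre-images straddle the boundary (the agent with the larger weight is projected onto $x^*=0$, the other remains strictly interior at distance $\Theta(\eta)$), so $|z_1[k]-z_2[k]|=\Theta(\eta)$; (4) propagate through $R$ using the $n=2$ identity $x_1[k+1]-x_2[k+1]=(a-b)\bigl(z_1[k]-z_2[k]\bigr)$, which gives $D(\boldx[k+1],\phi_k)=\Theta(\eta)$; (5) conclude that at this step $D(\boldx[k],\phi_k)$ and $S(\boldy[k],\pi_k)$ are negligible while $D(\boldx[k+1],\phi_k)$ and $\normphik{\boldx[k]-\boldx^*}$ are both $\Theta(\eta)$ and commensurate, so the hypothesized inequality forces $c_1(\eta)\ge b$ for a constant $b>0$ independent of $\eta$, whence $\lim_{\eta\to0}c_1(\eta)\neq0$.

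The main obstacle is isolating $c_1$. Since the projection is $1$-Lipschitz, a single step started from exact consensus only yields $|z_1[k]-z_2[k]|\le\eta\,|y_1[k]-y_2[k]|\to0$, so the impossibility cannot rest on the consensus error being large in absolute terms. The whole argument instead hinges on the \emph{ratio}: the band forces the optimality gap to be $\Theta(\eta)$ \emph{at the same step}, so a consensus error of order $\eta$ is nonetheless $\Theta(1)$ relative to the optimality gap, and this ratio is unabsorbable by the companion terms only if $D(\boldx[k],\phi_k)$ and $S(\boldy[k],\pi_k)$ are genuinely negligible there. Pinning down such a step is delicate precisely because a directed graph does not preserve consensus for free (with $a\neq b$ the common value is not maintained after one step), so the transient in which $\boldx[k]$ is (near-)consensual, $S(\boldy[k],\pi_k)\approx 0$, $\pi_k\approx\pi^*$, and the iterate sits in the $\Theta(\eta)$-band must all be made to coincide; quantifying the residual $D(\boldx[k],\phi_k)$ and $S(\boldy[k],\pi_k)$ at that step and showing they are $o(\eta)$ (so they cannot be exploited to satisfy the inequality with a vanishing $c_1$) is the technical heart of the proof.
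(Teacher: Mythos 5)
Your high-level idea is the paper's: a two-agent example whose only essential ingredient is a column-stochastic $C$ with non-uniform Perron vector (so that, even with $S(\boldy[k],\pi_k)=0$, the two agents take projected steps with different effective step sizes $\eta n[\pi_k]_i$), combined with a boundary optimum at which $\nabla f(x^*)\neq 0$, so that both the one-step consensus error and the optimality gap are of order $\eta$ at the same instant and their ratio is bounded away from zero. Your structural explanation of why a doubly-stochastic/uniform-$\pi$ setup cannot work is correct and matches the paper's intent. However, your execution plan has a genuine gap that the paper's construction is specifically designed to avoid. You propose to run the dynamics until the iterates enter the $\Theta(\eta)$ band near the boundary, until $\pi_k$ has ``relaxed'' to $\pi^*$, and until $D(\boldx[k],\phi_k)$ and $S(\boldy[k],\pi_k)$ are ``essentially'' or ``negligibly'' small, i.e.\ $o(\eta)$. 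This is not enough: the lemma places no upper bound whatsoever on $c_2(\eta)$ and $c_3(\eta)$, so any nonzero residual in $D(\boldx[k],\phi_k)$ or $S(\boldy[k],\pi_k)$ at your critical step can absorb the entire left-hand side by taking $c_2(\eta)$ or $c_3(\eta)$ large, leaving $c_1(\eta)=0$ admissible at that step. To force a lower bound on $c_1$ you need the companion terms to vanish \emph{exactly} at the step you use, and your transient argument cannot deliver that (for generic $C$, $\pi_k=C^k\pi_0$ never equals $\pi^*$, and near-consensus is never exact consensus after mixing through a non-symmetric $R$).

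The paper gets exact zeros with a one-step construction that you are missing two devices for. First, it chooses the local costs proportional to the Perron weights, $f_i(x)=\pi_i\frac{L}{2}x^2$ with $C\pi=\pi$, so that $y_i[0]=\nabla f_i(x[0])=\pi_i L x[0]$ is exactly proportional to $\pi$; hence $S(\boldy[0],\pi)=0$ identically, not merely asymptotically (and it works with the eigenvector itself as the weight sequence rather than waiting for $\pi_k\to\pi$). Second, it starts both agents at the common point $x_1[0]=x_2[0]=\frac{1}{1-\eta L\pi_1}$, which makes $D(\boldx[0],\phi)=0$ exactly while sitting at distance $\Theta(\eta)$ from the boundary optimum $x^*=1$ of $\mX=\{x\ge1\}$; after one update agent $1$ lands exactly on $x^*$ and agent $2$ lands at $\frac{1-\eta L\pi_2}{1-\eta L\pi_1}>1$, so the post-step disagreement is $\frac{\eta L(\pi_1-\pi_2)}{1-\eta L\pi_1}$ and the ratio to $\normphik{\boldx[0]-\boldx^*}=\frac{\eta L\pi_1}{1-\eta L\pi_1}$ is the $\eta$-independent constant $\frac{1}{\sqrt2}(1-\pi_2/\pi_1)$. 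Note also that in this construction the projection never actually truncates anything and $R$ may be taken doubly stochastic; the directedness you insist on for $R$ (your $a\neq b$) is inessential, and only the non-uniformity of $C$'s Perron vector matters. If you replace your multi-step ``wait for the band'' argument with this explicit initialization, your proof closes; as written, the step where you rule out absorption of the $o(\eta)$ residuals by $c_2$ and $c_3$ does not go through.
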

\begin{proof}
    Let $f(x)= \frac{L}{2}x^2= \sum_{i=1}^2 f_i(x)$ where $f_1(x) = ~\pi_1 \frac{L}{2}x^2$ and $f_2(x) = \pi_2\frac{L}{2}x^2$. Let $\mX = \{x\in ~\R \mid x \geq 1\}$ be the closed and convex constraint set. Without loss of generality, assume $R_k=R$ and $C_k=C$ for all $k$. Furthermore, assume that $R$ is doubly stochastic and $C$ is a column stochastic matrix with the right eigenvector $\pi$ such that $C\pi=\pi$ with $\pi_1 > \pi_2$. Let $\eta \in \R$ with $0 < \eta < \frac{1}{L\pi_2}$ be arbitrary. Also, construct the initial conditions as $x_1[0]=x_2[0]=\frac{1}{1-\eta L \pi_1}$. Then, clearly, $D(\boldx[0], \phi)=S(\boldy[0], \pi)=0$. So, it must be that
    \begin{align*}
        D(\boldx[1], \phi)\leq c_1(\eta) \normphi{\boldx[0]-\boldx^*}.
    \end{align*}
    By construction, we have that $\normphi{\boldx[0]-\boldx^*}=\frac{\eta L \pi_1}{1-\eta L}$ and $D(\boldx[1], \phi)=\frac{1}{\sqrt{2}} \frac{\eta L}{1-\eta L} (\pi_1-\pi_2)$. Then, we must have
    \begin{align*}
        0<\frac{1}{\sqrt{2}}\eta L (\pi_1-\pi_2) &\leq c_1(\eta) \eta L \pi_1\\
        0 < \frac{1}{\sqrt{2}}(1- \frac{\pi_2}{\pi_1})&\leq c_1(\eta),
    \end{align*}
    which means that $\lim_{\eta \to 0} c_1(\eta) \neq 0$ as the relation above holds for any $\eta$ in the range $0 < \eta < \frac{1}{L\pi_2}$. 
\end{proof}
Essentially, \Cref{corollary:impossibility_consecutive_x} and \Cref{lemma:impossiblilty_consensus_bound} show that no matter how we bound the error terms, we cannot gain full control over the non-diagonal entries of the composite relation matrix $M(\eta, \lambda)$,
with $\lambda=1$. These entries are essential in controlling the spectral radius of $M(\eta, 1)$, and guaranteeing convergence. However, being able to choose a $\lambda$ in range $(0,1]$ gives us more flexibility.
\section{NUMERICAL STUDIES}
 In this section, we empirically demonstrate the convergence of our algorithm on a sample optimization problem and also investigate the effect of the graph properties and mixing times of matrices $R_k$ and $C_k$ on convergence.
\subsection{Convergence of Protocol on Time Varying Communication Networks}
\textbf{Optimization Problem:} We have $n=50$ agents. Agent $i$ has objective function $(x_i - x_i^c)^{\intercal} P_i (x_i - x_i^{c})$  where $x_i^c \in \R^d$ is the center of the quadratic and the matrix $P_i \in \R^{d \times d}$ is positive definite with $\mu I \preceq P_i \preceq L I$ for some $\mu$ and $L$. Hence, the global objective is $\sum_{i=1}^n (x_i-x_i^c)^\intercal P_i (x_i - x_i^c)$.

For this experiment, we set $d=2$ and sample the coordinates of $x_i^c$ from the uniform distribution $\mU[-2,8]$ independently for each $i$. Similarly, we set $P_i$ to be diagonal for each $i$, and sample each entry in the diagonal independently from $\mU[0,1]$. Notice that the objectives are strongly convex and smooth. We set $\mX = \overline{B((6,6), 2)}$, the closed disk around $(6,6)$ with radius $2$. This setup is likely to result in a optimal point $x^*$ in the boundary of $\mX$, which helps us demonstrate the effectiveness of our algorithm when $\nabla f(x^*) \neq 0$, in contrast with the unconstrained setting.

\textbf{Communication graphs:} We construct the time-varying graphs by iterating over the sequence of graphs $\{\mG_1, \dots, \mG_T\}$ where $T=5$.  That is, $\mG_k = \mG_{(k-1 \mod T) + 1}$. We generate $\mG_i$ independently for each $i\in [T]$ as follows. For all $j,l \in \mV$ we add the edge $(j,l)$ to $\mE_i$ with probability $p=0.1$. We regenerate the graph if it is not strongly connected.

\textbf{Mixing matrices $R_k$ and $C_k$:} Let $\mN_i^{\sf in}[k]$ and $\mN_{i}^{\sf out}[k]$ denote the in/out neighborhoods of agent $i$ at time $k$ respectively. That is, $j\in \mN_i^{\sf in}[k]$ if and only if $(j,i) \in \mE_k$ and $j \in \mN_i^{\sf out}[k]$ if and only if $(i,j) \in \mE_k$. Agent $i$ sets the $i$'th row of $R_k$ and the $i$'th column of $C_k$ as follows:
\begin{align*}
    [R_k]_{ij} &= \begin{cases}
        \frac{1}{|\mN_i^{\sf in}[k]|+1} &\text{ if } j \in \mN_i^{\sf in}[k] \text{ or } j = i\\
        0&\text{ otherwise }
    \end{cases},\\
    [C_k]_{ji}& \begin{cases}
        \frac{1}{|\mN_i^{\sf out}[k]|+1} &\text{ if } j \in \mN_i^{\sf out}[k] \text{ or } j = i\\
        0&\text{ otherwise }
    \end{cases}.
\end{align*}

\textbf{Optimization parameters}: We set $\eta = 0.5,  \lambda = 0.7$. We initialize $x_i[0]$ by sampling each coordinate from $\mU[0,10]$ and projecting onto $\mX$. 

We plot the error terms\footnote{One minor difference between these error terms and the ones used in the analysis is that we cannot compute the sequence $\{\phi_k\}$ as used in the analysis. Therefore, we choose $\phi_k = \frac{1}{n}\boldone$ for all $k$. We initialize $\pi_0 = \frac{1}{n} \boldone$ and let $\pi_{k+1} = C_k \pi_k$.} in \Cref{fig:demonstration-experiment}.  As we can see in \Cref{fig:demonstration-experiment}, all of the error goes to $0$ with geometric (linear) rate. The convergence rates (slopes) of all the terms are similar, which highlights the interdependence of these errors.
\begin{figure}
    \centering
    \includegraphics[width=0.45\textwidth]{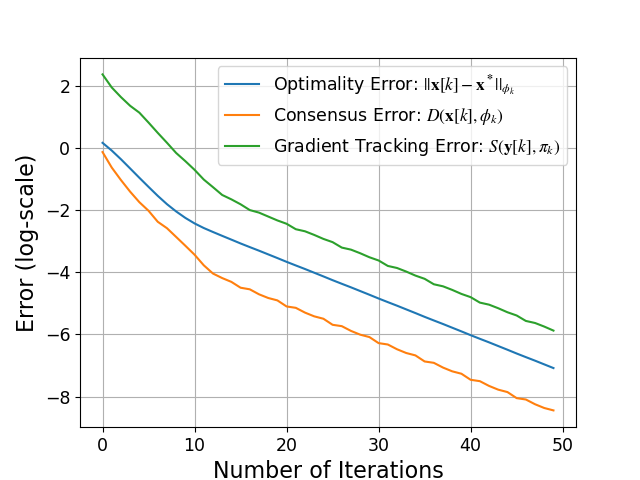}
    \caption{Optimality, consensus, and gradient tracking errors vs. number of iterations on a log-scale. The errors converge to $0$ as geometric (linear) rate with similar convergence rates.}
    \vspace{-5pt}
    \label{fig:demonstration-experiment}
\end{figure} 

\subsection{Effect of Graph Type on Convergence}

Because the contraction of matrices $R_k$ and $C_k$ are related to the communication graph as given in \Cref{lemma:R-contraction} and \Cref{lemma:C-contraction}, graph type will affect the convergence rate of our algorithm. Therefore, we investigate the effect of graph type on the convergence rate. In this section, we set $n=15$ and $T=1$ (static graphs) but otherwise use the same setup for the objective function and constraint set as the previous section. We generate three different graph types as follows:
1) \textit{Random}: Same as described in the previous section.
2) \textit{Cyclic}:~We have $\mV=[n]$ and $(i,i+1) \in \mE$ for $i=1, \dots, n-1$ and $(n, 1) \in \mE$. 
3) \textit{Unbalanced}:~Graph where certain nodes have very high in-degrees and low out-degrees and vice versa.

We set $R_k$ and $C_k$ to be compatible for each graph as described in the previous section. We fix $\eta = 0.5$ for all graphs, and for each graph, we choose $\lambda$ to be the largest value that allows convergence. Specifically, the random graph requires $\lambda = 0.15$, the unbalanced graph requires $\lambda = 0.3$, and the cyclic graph requires $\lambda=0.6$ to have convergence. The comparison of the convergence rate between these graphs is shown in \Cref{fig:graph-comparison}. 
\begin{figure}
    \centering
    \includegraphics[width=0.45\textwidth]{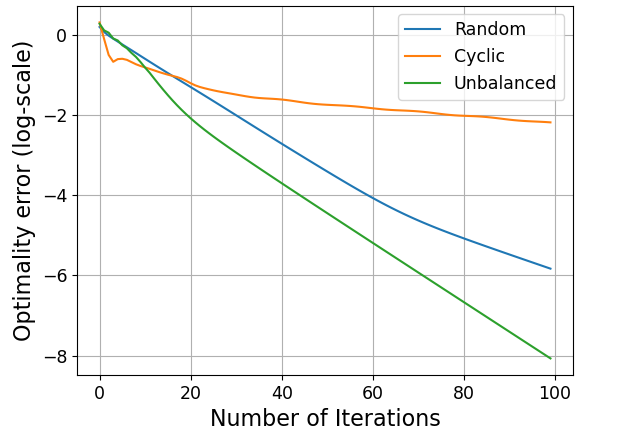}
    \caption{Optimality error $\normphik{\boldx[k] - \boldx^*}$ vs. number of iterations $k$ on a log-scale for different graph types. Even though the error converges to 0 geometrically for all graphs, unbalanced and random graphs have a much higher convergence rate compared to cyclic graphs, which are slowly mixing.}
    \label{fig:graph-comparison}
    \vspace{-20pt}
\end{figure}
We see that the random and unbalanced graphs have a faster convergence due to having higher-connectivity. 

\section{CONCLUSIONS AND FUTURE WORK}
In this paper, we introduce the Projected Push-Pull algorithm, which combines gradient tracking and projected gradient method to address distributed constrained optimization problems on time-varying directed graphs. We prove that our algorithm achieves geometric convergence with sufficiently small step sizes. We derive explicit bounds for the step sizes based on the characteristics of the cost functions and the communication graph. Moreover, we provide additional theoretical results showing that having a non-zero gradient at the optimal point in constrained problems poses additional challenges in the analysis of gradient tracking methods employing projection. Finally, we demonstrate the geometric convergence of our algorithm via numerical studies over various graph types. An interesting direction for future work is the incorporation of random and adversarial noise.


\bibliographystyle{IEEEtran} 
\bibliography{references}

\newpage
\onecolumn

\appendix
\label{sec:appendix}
\subsection{Gradient Contraction Results}
\begin{lemma}
    Let $f: \R^d \to \R$ be $\mu$-strongly convex and $L$-smooth. Then, for any $x, y \in \R^d$ we have
    \begin{align*}
        \langle \nabla f(x) -\nabla f(y), x-y \rangle \geq \frac{\mu L}{\mu + L} \normsq{x-y} + \frac{1}{\mu + L} \normsq{\nabla f(x) - \nabla f(y)}
    \end{align*}
    \label{lemma:grad-inner-product}
\end{lemma}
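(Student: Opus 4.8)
The plan is to reduce the claim to the standard co-coercivity (Baillon--Haddad) inequality for a convex, smooth auxiliary function. The key observation is that subtracting a quadratic turns strong convexity into plain convexity while only shrinking the smoothness modulus, after which everything becomes algebra.

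First I would dispose of the degenerate case $\mu = L$. Combining $L$-smoothness with the Cauchy--Schwarz inequality gives $\langle \nabla f(x) - \nabla f(y), x-y\rangle \leq \mu \normsq{x-y}$, while $\mu$-strong convexity gives the reverse inequality; equality in Cauchy--Schwarz then forces $\nabla f(x) - \nabla f(y) = \mu(x-y)$, so both sides of the claimed inequality reduce to $\mu\normsq{x-y}$ and the statement holds with equality. I would then assume $\mu < L$. Define $g(x) \triangleq f(x) - \frac{\mu}{2}\normsq{x}$. Strong convexity of $f$ makes $g$ convex, and $L$-smoothness of $f$ makes $\nabla g(x) = \nabla f(x) - \mu x$ into a $(L-\mu)$-Lipschitz map, i.e.\ $g$ is $(L-\mu)$-smooth. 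For any convex, $\beta$-smooth function the gradient is $\tfrac1\beta$-co-coercive, so here
$$\langle \nabla g(x) - \nabla g(y),\, x-y\rangle \;\geq\; \frac{1}{L-\mu}\,\normsq{\nabla g(x) - \nabla g(y)}.$$
This is the only nontrivial ingredient, and it is the standard characterization of smooth convex functions (e.g.\ Nesterov's co-coercivity lemma).

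Finally I would substitute $\nabla g = \nabla f - \mu\,\mathrm{id}$ and expand. Writing $A = \langle \nabla f(x) - \nabla f(y), x-y\rangle$, $B = \normsq{\nabla f(x) - \nabla f(y)}$, and $C = \normsq{x-y}$, the left side of the co-coercivity bound is $A - \mu C$ and the squared term on the right expands to $B - 2\mu A + \mu^2 C$, giving
$$A - \mu C \;\geq\; \frac{1}{L-\mu}\bigl(B - 2\mu A + \mu^2 C\bigr).$$
Multiplying through by $L-\mu>0$ and collecting terms cancels the $\mu^2 C$ contributions and yields $(\mu+L)A \geq B + \mu L\,C$, which is exactly the claim after dividing by $\mu+L$. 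I expect the main obstacle to be establishing (or cleanly citing) the co-coercivity of $\nabla g$; once that is in hand the substitution and rearrangement are purely routine, and the only subtlety to watch is keeping the $\mu=L$ boundary case separate so that the division by $L-\mu$ is legitimate.
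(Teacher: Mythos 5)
Your proof is correct: reducing to the convex function $g = f - \tfrac{\mu}{2}\normsq{\cdot}$, invoking co-coercivity of $\nabla g$ with modulus $L-\mu$, and expanding is the standard argument for this inequality, and your algebra (including the separate treatment of the degenerate case $\mu = L$) checks out. The paper itself offers no proof — it simply cites Lemma 3.11 of Bubeck's monograph — and your argument is precisely the textbook proof behind that citation, so the two are consistent; the only point worth a word of care is that the $(L-\mu)$-smoothness of $g$ follows from convexity of $\tfrac{L}{2}\normsq{\cdot} - f$ rather than from a bare triangle-inequality bound on $\nabla f - \mu\,\mathrm{id}$.
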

\begin{proof}
    See Lemma 3.11 in \cite{bubeck2015convex}
\end{proof}

\subsubsection{Proof of \cref{lemma:projected-gd-contraction}}
    First, we plug in the definition of $\mT_\eta$ and use non-expansivity, then expand the squared norm
    \begin{align*}
        \normsq{\mT_\eta(x)-\mT_\eta(y)}&=\normsq{\proj{x-\eta \nabla f(x)}- \proj{y-\eta \nabla f(y)}}\\
        &\leq \normsq{x-y - \eta (\nabla f(x) - \nabla f(y))}\\
        &= \normsq{x-y} - 2\eta \langle x-y, \nabla f(x) - \nabla f(y)\rangle +\eta ^2\normsq{\nabla f(x) -\nabla f(y)}.
    \end{align*}
Now, choose $\eta < \frac{2}{\mu + L}$. Then, let $L' = \frac{2}{\eta} - \mu \geq L$ and $\mu'= \mu$. Then, it follows that $f$ is $\mu$' strongly convex and $L'$ smooth. Applying \cref{lemma:grad-inner-product} to the inner product, we get
\begin{align*}
    \normsq{\mT_\eta(x)- \mT_\eta (y)}&\leq \left(1 -2 \eta \frac{\mu' L'}{\mu' + L'}\right) \normsq{x-y} + \left(\eta ^2 - 2\eta \frac{1}{\mu'+L'}\right) \normsq{\nabla f(x) - \nabla f(y)}
\end{align*}
Then, notice that $\frac{1}{\mu' + L'}=\frac{\eta }{2}$. Plugging in, we get
\begin{align*}
    \normsq{\mT_\eta(x)- \mT_\eta (y)}&\leq \left(1 -\eta^2 \mu' L'\right)\normsq{x-y}+\left(\eta^2 - 2 \eta \frac{\eta}{2}\right) \normsq{\nabla f(x)- \nabla f(y)} \\
    &= (1-\eta^2 \mu (2/\eta - \mu))\normsq{x-y}\\
    &=(1-2\eta \mu +\eta^2 \mu^2 ) \normsq{x-y}\\
    &=(1-\eta \mu)^2 \normsq{x-y}.
\end{align*}
Taking square roots of both sides gives us the desired result. Notice that $|1-\eta \mu| = 1 -\eta \mu$ when $\eta < \frac{2}{\mu + L}< \frac{1}{\mu}$.

The following corollary follows directly from this lemma.
\begin{corollary}[Contraction with lazy updates]
Let $\mT_{\eta, \lambda}(x) = (1-\lambda)x + \lambda \proj{x-\eta \nabla f(x)}$ and $\eta < \frac{2}{\mu + L}$ and $\lambda \in (0, 1]$. Then,
\begin{align*}
    \norm{\mT_{\eta, \lambda}(x)-\mT_{\eta, \lambda}(y)} &\leq q(\eta, \lambda) \norm{x-y}
\end{align*}
where $q(\eta, \lambda)= 1-\lambda \eta \mu$ for $\eta < \frac{2}{\mu + L}$ and any $\lambda \in (0, 1]$.
\label{corollary:gradient-contraction-with-lazy}
\end{corollary}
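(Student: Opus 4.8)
The plan is to exploit the fact that the lazy operator $\mT_{\eta,\lambda}$ is nothing but a convex combination of the identity map and the plain projected-gradient operator $\mT_\eta$ from \Cref{lemma:projected-gd-contraction}. Writing $\mT_\eta(x) = \proj{x - \eta \nabla f(x)}$, the definition gives $\mT_{\eta,\lambda}(x) = (1-\lambda)x + \lambda \mT_\eta(x)$, so the whole statement should reduce to an elementary application of the triangle inequality on top of the already-established contraction of $\mT_\eta$.

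First I would form the difference and group the terms by their identity and $\mT_\eta$ contributions:
\begin{align*}
\mT_{\eta,\lambda}(x) - \mT_{\eta,\lambda}(y) = (1-\lambda)(x-y) + \lambda\big(\mT_\eta(x) - \mT_\eta(y)\big).
\end{align*}
Taking norms and applying the triangle inequality, using that $1-\lambda \ge 0$ since $\lambda \in (0,1]$, yields
\begin{align*}
\norm{\mT_{\eta,\lambda}(x) - \mT_{\eta,\lambda}(y)} \le (1-\lambda)\norm{x-y} + \lambda \norm{\mT_\eta(x) - \mT_\eta(y)}.
\end{align*}

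Next I would invoke \Cref{lemma:projected-gd-contraction}, which applies precisely because we assume $\eta < \frac{2}{\mu+L}$, to bound the second term by $\lambda\, q(\eta)\norm{x-y}$ with $q(\eta) = 1-\eta\mu$. Collecting the coefficients gives the contraction factor $(1-\lambda) + \lambda q(\eta)$, and substituting $q(\eta) = 1-\eta\mu$ simplifies it to $1 - \lambda\eta\mu$, which is exactly $q(\eta,\lambda)$. Since $0 < \eta\mu < 1$ (again from $\eta < \frac{2}{\mu+L} \le \frac{1}{\mu}$, using $\mu \le L$) and $\lambda \in (0,1]$, the factor lies in $[0,1)$, confirming that the lazy map is a genuine contraction.

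There is essentially no obstacle here: all of the analytic content — the non-expansiveness of the projection together with the strong-convexity and smoothness estimate — is already encapsulated in \Cref{lemma:projected-gd-contraction}. The only point requiring mild care is that the convex-combination structure needs $\lambda \le 1$ so that both weights $(1-\lambda)$ and $\lambda$ are nonnegative, ensuring the triangle inequality carries the individual Lipschitz constants through without introducing extra factors.
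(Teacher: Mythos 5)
Your proof is correct and matches the paper's argument, which likewise just writes $\mT_{\eta,\lambda}$ as the convex combination $(1-\lambda)\,\mathrm{id} + \lambda \mT_\eta$, applies the triangle inequality, and invokes \Cref{lemma:projected-gd-contraction}; you have simply spelled out the coefficient bookkeeping that the paper leaves implicit.
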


\begin{proof}
    Immediately follows from \cref{lemma:projected-gd-contraction} after writing the definition of $\mT_{\eta, \lambda}$ and applying triangle inequality.
\end{proof}
\begin{remark}
    Under the conditions in \cref{lemma:projected-gd-contraction}, we have that $\norm{\mT_\eta(x)-x^*} \leq q(\eta) \norm{x-x^*}$ where $x^*$ is the unique minimizer of $f$ over the set $\mX$. This is because $x^* = T_\eta (x^*)$ for all $\eta > 0$.
\end{remark}

\subsection{Bounding the Optimality Gap}

\subsubsection{Proof of \cref{proposition:imperfect_gradient_error}}
    First, let us analyze each term in the norm $\normphik{\boldz[k]-\boldw[k]}$
    \begin{align*}
        \normsq{z_i[k]-w_i[k]}&=\normsq{(1-\lambda)x_i[k]+\lambda \proj{x_i[k]-\eta y_i[k]}-(1-\lambda)x_i[k]-\lambda \proj{x_i[k]-\eta n\piki \nabla f(x_i[k])}}\\
        &=\lambda^2 \normsq{\proj{x_i[k]-\eta y_i[k]}- \proj{x_i[k]-\eta n\piki \nabla f(x_i[k])}}\\
        &\leq \lambda^2 \normsq{x_i[k]-\eta y_i[k]-x_i[k]-\eta n\piki \nabla f(x_i[k])}\\
        &=\lambda^2 \eta^2 \normsq{y_i[k]-n\piki \nabla f(x_i[k])}.
    \end{align*}
    Let $g_i[k]$ be the sum of all gradients evaluated at $x_i[k]$, that is $g_i[k] \triangleq n\nabla f(x_i[k])=\sum_{l=1}^n \nabla f_l(x_i[k])$. Similarly, define $h_i[k]$ be the sum of agents' $y$-values. That is, $h_i[k] \triangleq \sum_{l=1}^n y_l[k] = \sum_{l=1}^n \nabla f_l(x_l[k])$. Then, using the above inequality, we have 
    \begin{align*}
        \normphik{\boldz[k]-\boldw[k]}&\leq \lambda \eta \normphik{\boldy[k]-\diag(\pi_k)\boldg[k]}\\
        &\leq \lambda \eta \normphik{\boldy[k]-\diag(\pi_k)\boldh[k]} + \lambda \eta \normphik{\diag(\pi_k)\boldh[k]-\diag(\pi_k)\boldg[k]}.
    \end{align*}
     Here, the first term is related to the consensus in $y$-values, and the second term is related to the consensus between $x$-values. Hence, we analyze the first term as follows
    \begin{align*}
        \normphik{\boldy[k]-\diag(\pi_k)\boldh[k]}&=\sqrt{\sum_{i=1}^n\phiki \normsq{y_i[k]-\piki \sum_{l=1}^n y_l[k]}}\\
        &=\sqrt{\sum_{i=1}^n \phiki \piki^2 \normsq{\frac{y_i[k]}{\piki}-\sum_{l=1}^n y_l[k]}}\\
        &\leq \sqrt{\max_i(\phiki \piki)} \sqrt{\sum_{i=1}^n \piki \normsq{\frac{y_i[k]}{\piki}-\sum_{l=1}^n y_l[k]}}\\
        &=\sqrt{\max_i (\phiki \piki)} \Syk \leq \Syk,
    \end{align*}
    since $\phiki \piki \leq 1$ for all $i, k$. Now, we focus our attention to the second term, $\normphik{\diag(\pi_k)\boldh[k]-\diag(\pi_k)\boldg[k]}$. 
    \begin{align*}
        \normphik{\diag(\pi_k)\boldh[k]-\diag(\pi_k)\boldg[k]}&=\sqrt{\sum_{i=1}^n \phiki \normsq{\piki\sum_{l=1}^n y_l[k]-\piki\sum_{l=1}^n \nabla f_l(x_i[k])}}\\
        &=\sqrt{\sum_{i=1}^n \phiki \piki^2 \normsq{\sum_{l=1}^n \nabla f_l(x_l[k])-\sum_{l=1}^n \nabla f_l(x_i[k])}}\\
        &\overset{(a)}{\leq} \sqrt{\max_i \piki^2} \sqrt{n\sum_{i=1}^n \phiki \sum_{l=1}^n \normsq{\nabla f_l(x_l[k])-\nabla f_l(x_i[k])}}\\
        &\overset{(b)}{\leq} \sqrt{\frac{n}{\min \phi_k}}\sqrt{\sum_{i=1}^n \sum_{l=1}^n \phiki \phikl \normsq{\nabla f_l(x_l[k])-\nabla f_l(x_i[k])}}\\
        &\leq L\sqrt{\frac{n}{\min \phi_k}} \sqrt{\sum_{i=1}^n \sum_{l=1}^n \phiki\phikl\normsq{x_l[k]-x_i[k]}}\\
        &=L\sqrt{\frac{n}{\min \phi_k}}\Dxk.
    \end{align*}
    Here, we used  $\normsq{\sum_{i=1}^n x_i}\leq n \sum_{i=1}^n \normsq{x_i}$ in (a) and $\sqrt{\max_i \piki^2}\leq 1$ in (b).
    Plugging back to the initial expression, we obtain
    \begin{align*}
        \normphik{\boldz[k]-\boldw[k]}\leq \lambda \eta L \sqrt{\frac{n}{\min \phi_k}} \Dxk+\lambda  \eta \Syk,
    \end{align*}
    as desired.

\subsubsection{Proof of \cref{lemma:optimality-gap}}
    Notice that we have $\normphikp{\boldx[k+1]-\boldx^*} \leq \normphik{\boldz[k]-\boldx^*}$ from \cref{lemma:R-contraction} with $u=x^*$. Then, we write
    \begin{align*}
        \normphik{\boldz[k]-\boldx^*}\leq \normphik{\boldw[k]-\boldx^*} + \normphik{\boldw[k]-\boldz[k]}.
    \end{align*}
    Now, the first term represents the contractions coming from projected gradient under global gradient knowledge. Since $\eta < \frac{1}{L}$ and $\min \pi_k n \leq 1$, we have $\eta n \min \pi_k < \frac{1}{L}\leq \frac{2}{\mu + L}$. Using \cref{corollary:gradient-contraction-with-lazy}, we have
    \begin{align*}
    \normphik{\boldw[k]-\boldx^*} &= \sqrt{\sum_{i=1}^n \phiki \normsq{w_i[k] -x^*}}\\
    &\leq \sqrt{\sum_{i=1}^n \phiki q(\eta n\piki, \lambda)^2 \normsq{x_i[k]-x^*}}\\
    &\leq q_k(\eta, \lambda) \normphik{\boldx[k]-\boldx^*},
\end{align*}
which gives
\begin{align*}
    \normphikp{\boldx[k+1]-\boldx^*} \leq q_k(\eta, \lambda) \normphik{\boldx[k]-\boldx^*} + \normphik{\boldz[k]-\boldw[k]}.
\end{align*}
Combining with the bound for $\normphik{\boldz[k]-\boldw[k]}$ from \cref{proposition:imperfect_gradient_error}, we get the desired result.

\subsection{Bounding Consensus Error}

\subsubsection{Proof of \cref{proposition:removing-gradient-error-consensus}}

   We want to relate $z_i[k]$ to $w_i[k]$, where $w_i[k]=(1-\lambda)x_i[k]-\lambda\proj{x_i-\eta n\piki \nabla f(x_i[k])}$ as defined in \cref{sec:bounding_optimality_gap}. Notice that
   \begin{align}
       \normsq{z_i[k]-z_j[k]}&=\normsq{z_i[k]-w_i[k]+w_i[k]-w_j[k]+w_j[k]-z_j[k]}
       \label{eq:quantifying-consensus-error-w}
   \end{align}
   where the terms $w_i[k]-z_i[k]$ and $w_j[k]-z_j[k]$ quantify the gradient errors and $w_i[k]-w_j[k]$ gives us the consensus error under perfect gradient knowledge. Writing the definitions of $D(\boldz[k], \phi_k)$ and $D(\boldw[k], \phi_k)$ and using the triangle inequality for the $\ell_2$ norm, we have
   \begin{align*}
       D(\boldz[k], \phi_k)&=\sqrt{\sum_{i=1}^n \sum_{j=1}^n \phiki \phikj\normsq{z_i[k]-z_j[k]}}\\
       &=\sqrt{\sum_{i=1}^n \sum_{j=1}^n \phiki \phikj\normsq{z_i[k]-w_i[k]+w_i[k]-w_j[k]+w_j[k]-z_j[k]}}\\
       &\overset{(a)}{\leq} 2\sqrt{\sum_{i=1}^n \sum_{j=1}^n \phiki \phikj\normsq{z_i[k]-w_i[k]}} + \sqrt{\sum_{i=1}^n \sum_{j=1}^n \phiki \phikj\normsq{w_i[k]-w_j[k]}}\\
       &\overset{(b)}{=} 2 \normphik{\boldz[k]-\boldw[k]} + D(\boldw[k], \phi_k),
   \end{align*}
    where in (a) we used the symmetry between $z_i[k]-w_i[k]$ and $z_j[k]-w_j[k]$, and in (b) we used the fact that $\sum_{j=1}^n \phikj = 1$.

\subsubsection{Proof of \cref{proposition:w-consensus-bound}}
    We will need to expand the definition of $D(\boldw[k], \phi_k)$ first. The main challenge of this part of the analysis is that $w_i[k]$ and $w_j[k]$ represent two agents taking gradient steps with different step sizes. Hence, to capture this, we define the following term: Let $r_{ij}[k]=(1-\lambda)x_j[k]-\lambda \proj{x_j[k]-\eta n \piki\nabla f(x_j[k])}$, i.e., applying update rule with global gradient to $x_j[k]$ with stepsize $\eta n [\pi_k]_i$.
    \begin{align}
        D(\boldw[k], \phi_k)&=\sqrt{\sum_{i=1}^n \sum_{j=1}^n \phiki \phikj\normsq{w_i[k]-w_j[k]}}\nonumber\\
        &=\sqrt{\sum_{i=1}^n \sum_{j=1}^n \phiki \phikj \normsq{w_i[k]-r_{ij}[k]+r_{ij}[k]-w_j[k] }}\nonumber\\
        &\leq \sqrt{\sum_{i=1}^n \sum_{j=1}^n \phiki \phikj \normsq{w_i[k]-r_{ij}[k]}} +\sqrt{\sum_{i=1}^n \sum_{j=1}^n \phiki\phikj \normsq{r_{ij}[k]-w_j[k] }}, \label{eq:consensus-w-split}
    \end{align}
    where the last line follows from the triangle inequality in the $\ell_2$ norm. Now, we have two terms. One is the consensus error between two agents that take a gradient step on the global gradient with the same step size. The other is the error in a single agent induced by taking different step sizes from the same point. We initially turn our attention to the first term. We have
    \begin{align*}
        &\sqrt{\sum_{i=1}^n \sum_{j=1}^n \phiki \phikj \normsq{w_i[k]-r_{ij}[k]}} \\ &=\sqrt{\sum_{i=1}^n \sum_{j=1}^n \phiki \phikj \normsq{(1-\lambda)(x_i[k]-x_j[k])+ \lambda(\proj{x_i[k]-\eta n[\pi_k]_i \nabla f(x_i[k])}-\proj{x_j[k]-\eta n[\pi_k]_i \nabla f(x_j[k]))}}}\\
        &\overset{(a)}{\leq} (1-\lambda + \lambda \max_{i} q(\eta n [\pi_k]_i)) \sqrt{\sum_{i=1}^n\sum_{j=1}^n \phiki\phikj \normsq{x_i[k]-x_j[k]}}= q_k(\eta, \lambda)\Dxk,
    \end{align*}
    where (a) follows from the contraction of projected gradient descent with lazy updates, i.e., \cref{corollary:gradient-contraction-with-lazy}. Now, we proceed to analyze the second term in \cref{eq:consensus-w-split}. Notice that
    \begin{align*}
        \normsq{r_{ij}[k]-w_j[k]}&=\normsq{(1-\lambda)x_j[k]+\lambda \proj{x_j[k]-\eta n[\pi_k]_i\nabla f(x_j[k])}-(1-\lambda)x_j[k]-\lambda \proj{x_j[k]-\eta n\pikj \nabla f(x_j[k])}}\\
        &=\lambda^2 \normsq{\proj{x_j[k]-\eta n[\pi_k]_i \nabla f(x_j[k])}-\proj{x_j[k]-\eta n\pikj \nabla f(x_j[k])}}.
    \end{align*}
    Hence, we have
    \begin{align*}
        &\sqrt{\sum_{i=1}^n \sum_{j=1}^n \phiki \phikj \normsq{r_{ij}[k]- w_j[k]}} \\ & = \lambda \sqrt{\sum_{i=1}^n \sum_{j=1}^n \phiki \phikj \normsq{\proj{x_j[k]-\eta n[\pi_k]_i \nabla f(x_j[k])}-\proj{x_j[k]-\eta n\pikj \nabla f(x_j[k])}}} \\
        &\leq \lambda \sqrt{\sum_{i=1}^n \sum_{j=1}^n \phiki \phikj \normsq{\proj{x_j[k]-\eta n[\pi_k]_i \nabla f(x_j[k])}-x^* + x^*-\proj{x_j[k]-\eta n\pikj \nabla f(x_j[k])}}} \\
     &\leq \lambda \sqrt{\sum_{i=1}^n \sum_{j=1}^n \phiki \phikj \normsq{\proj{x_j[k]-\eta n[\pi_k]_i \nabla f(x_j[k])}-x^*}}\\
     &\hspace{50mm}+ \lambda \sqrt{\sum_{i=1}^n \sum_{j=1}^n \phiki \phikj \normsq{x^*-\proj{x_j[k]-\eta n\pikj \nabla f(x_j[k])}}}.
    \end{align*}
    From the projected gradient contraction (\cref{lemma:projected-gd-contraction}), we have
    \begin{align*}
        \lambda \sqrt{\sum_{i=1}^n \sum_{j=1}^n \phiki \phikj \normsq{\proj{x_j[k]-\eta n\pikj \nabla f(x_j[k])}-x^*}} &\leq \lambda  \max_i q(\eta n[\pi_k]_i ) \normphik{\boldx[k]-\boldx^*}.
    \end{align*}
    Doing the same with the second term, we get
    \begin{align*}
        \sqrt{\sum_{i=1}^n \sum_{j=1}^n \phiki \phikj \normsq{r_{ij}[k]-w_j[k]}}&\leq 2\lambda q_k(\eta, 1) \normphik{\boldx[k]-\boldx^*}.
    \end{align*}
    Combining this expression with the earlier bound we get
    \begin{align*}
        D(\boldw[k], \phi_k)\leq 2 \lambda q_k(\eta, 1) \normphik{\boldx[k]-\boldx^*} + q_{k}(\eta, \lambda) \Dxk,
    \end{align*}
    as desired.

\subsubsection{Proof of \cref{lemma:consensus-error}}
First, notice that $\Dxkp \leq\sigma_k  D(\boldz[k], \phi_k)$ from \cref{lemma:R-contraction}. Then, from \cref{proposition:removing-gradient-error-consensus}, we have
    \begin{align*}
        \Dxkp \leq \sigma_k D(\boldz[k], \phi_k)\leq \sigma_k D(\boldw[k], \phi_k) + 2\sigma_k \normphik{\boldz[k] - \boldw[k]}.
    \end{align*}
    Then, plugging in the bound for $D(\boldw[k], \phi_k)$ as found in \cref{proposition:w-consensus-bound}, we have
    \begin{align*}
        \Dxkp &\leq  2\sigma_k \lambda q_k(\eta, 1) \normphik{\boldx[k]-\boldx^*} + \sigma_k q_{k}(\eta, \lambda) D(\boldx[k], \phi_k) + 2\sigma_k \normphik{\boldz[k]-\boldw[k]}.
    \end{align*}
    Finally, plugging in the bound for $\normphik{\boldz[k] - \boldw[k]}$ in \cref{proposition:imperfect_gradient_error}, we get
    \begin{align*}
        \Dxkp \leq 2 \lambda\sigma_k q_k(\eta, 1) \normphik{\boldx[k]-\boldx^*} + (\sigma_kq_{k}(\eta, \lambda) + 2\eta \lambda \sigma_k L  \varphi_k\sqrt{n}) \Dxk  + 2\eta \lambda \sigma_k \Syk.
    \end{align*}
    as desired.

\subsection{Gradient Tracking Error Bound}
\subsubsection{Proof of \cref{proposition:grad-tracking-bound-with-consecutive-terms}}

Let $v_i[k]=\sum_{j=1}^n [C_k]_{ij}y_j[k]$ and stack these vectors in $\boldv[k]$. Similarly, stack the gradients in $\nabla F(\boldx[k])=(\nabla f_1(x_1[k]), \dots, \nabla f_n(x_n[k]))^T$. Notice that $S(\boldy[k+1], \pi_{k+1}) = \normpikp{\diag(\pi_{k+1})^{-1} \boldy[k+1]-\boldh[k+1]}$ by definition. Then writing the expression for $\boldy[k+1]$ from the update rule, we get
    \begin{align*}
        &\diag(\pi_{k+1})^{-1} \boldy[k+1]-\boldh[k+1]\\
        &=\diag(\pi_{k+1})^{-1}\boldv[k]+\diag(\pi_{k+1})^{-1}\nabla F(\boldx[k+1]) - \diag(\pi_{k+1})^{-1}\nabla F(\boldx[k]) - \boldh[k+1]\\
        &\overset{(a)}{=}\diag(\pi_{k+1})^{-1}\boldv[k]-\boldh[k]+\diag(\pi_{k+1})^{-1}\nabla F(\boldx[k+1]) - \diag(\pi_{k+1})^{-1}\nabla F(\boldx[k]) + \boldh[k] - \boldh[k+1],
    \end{align*}
    where in (a) we added and subtraced $\boldh[k]$. Now, taking the $\normpikp{\cdot}$ norm of both sides and applying triangle inequality, we get
    \begin{align*}
        &\normpikp{\diag(\pi_{k+1})^{-1}\boldy[k+1]-\boldh[k+1]}\\
        &\leq \normpikp{\diag(\pi_{k+1})^{-1}\boldv[k]-\boldh[k]}  + \normpikp{\diag(\pi_{k+1})^{-1}(\nabla F(\boldx[k+1])-\nabla F(\boldx[k]))}
        + \normpikp{\boldh[k+1]-\boldh[k]} .
    \end{align*}
    Now, we know the contraction of the first term from the consensus due to the matrix $\boldC$ (\cref{lemma:C-contraction}), which gives us
    \begin{align*}
        \normpikp{\diag(\pi_{k+1})^{-1}\boldv[k]-\boldh[k]} \leq \tau_k \normpik{\diag(\pi_{k})^{-1}\boldy[k]-\boldh[k]}=\tau_k\Syk.
    \end{align*}
    Hence, we will now analyze the other two terms. Notice that
    \begin{align*}
        \normpikp{\diag(\pi_{k+1})^{-1} (\nabla F(\boldx[k+1])-\nabla F(\boldx[k]))}&=\sqrt{\sum_{i=1}^n \pikpi \normsq{\frac{\nabla f_i(x_i[k+1]) - \nabla f_i (x_i[k])}{\pikpi}}}\\
        &\leq  \sqrt{\frac{1}{\min \pi_{k+1}}} \sqrt{\sum_{i=1}^nL^2\norm{x_i[k+1]-x_i[k]}^2}\\
        &\leq L \sqrt{\frac{1}{\min \pi_{k+1}}} \norm{\boldx[k+1]-\boldx[k]}_{\boldone}.
    \end{align*}
    Similarly, we have
    \begin{align*}
        \normpikp{\boldh[k+1]-\boldh[k]}&=\sqrt{\sum_{i=1}^n \pikpi \normsq{\sum_{l=1}^n\nabla f_l(x_l[k+1]) - \nabla f_l(x_l[k])}}\\
        &\overset{(a)}{=}\norm{\sum_{l=1}^n \nabla f_l(x_l[k+1])-\nabla f_l(x_l[k])}\\
        &\leq  \sum_{l=1}^n \norm{\nabla f_l(x_l[k+1]) - \nabla f_l(x_l[k])}\\
        &\leq L \sum_{l=1}^n \norm{x_l[k+1]-x_l[k]}\\
        &\leq L\sqrt{n} \norm{\boldx[k+1]-\boldx[k]}_{\boldone},
    \end{align*}
    where (a) follows from the stochasticity of $\pi_{k+1}$. Letting $r_k=\sqrt{\frac{1}{\min \pi_{k+1}}}+\sqrt{n}$ and combining everything we have found, we obtain
    \begin{align*}
        \Sykp\leq \tau_k \Syk +Lr_k\norm{\boldx[k+1]-\boldx[k]}_{\boldone},
    \end{align*}
    as desired.

 Hence, we bound the distance between consecutive terms as follows.

\subsubsection{Proof of \cref{proposition:consecutive-terms-bound}}

We first write the definition of $\norm{\boldx[k+1]-\boldx[k]}_{\boldone}$ and expand $x_i[k+1]$ to get
    \begin{align}
        \norm{\boldx[k+1]-\boldx[k]}_{\boldone}&=\sqrt{\sum_{i=1}^n \normsq{x_i[k+1]-x_i[k]}}\nonumber\\
        &=\sqrt{\sum_{i=1}^n \normsq{\sum_{j=1}^n [R_k]_{ij}z_j[k]-x_i[k]}}\nonumber\\
        &=\sqrt{\sum_{i=1}^n \normsq{\sum_{j=1}^n [R_k]_{ij}z_j[k]-\sum_{j=1}^n [R_k]_{ij}x_j[k]+\sum_{j=1}^n [R_k]_{ij}x_j[k]-x_i[k]}}\nonumber\\
        &\overset{(a)}{\leq} \sqrt{\sum_{i=1}^n  \normsq{\sum_{j=1}^n [R_k]_{ij} (z_j[k]-x_j[k])}} + \sqrt{\sum_{i=1}^n  \normsq{\sum_{j=1}^n [R_k]_{ij} x_j[k]-x_i[k]}}.
         \label{eq:consecutive-terms-split}
    \end{align}
    Where we used the triangle equality for the $\ell_2$ norm in (a). Now, we will analyze the two terms that arise in the last line separately. Let's first look at the second term. Let $\hat x[k] = \sum_{i=1}^n \phiki x_i[k]$ and $\hat \boldx[k] = (\hat x[k], \dots, \hat x[k])$. Then,

    \begin{align}
        \sqrt{\sum_{i=1}^n \normsq{\sum_{j=1}^n [R_k]_{ij}x_j[k] - x_i[k]}}&\leq \sqrt{\sum_{i=1}^n \normsq{\sum_{j=1}^n [R_k]_{ij}x_j[k] - \hat x[k]}} + \sqrt{\sum_{i=1}^n \normsq{x_i[k]-\hat x[k] }}\nonumber\\
        &\leq \sqrt{\frac{1}{\min \phi_{k+1}}}\sqrt{\sum_{i=1}^n \phikpi\normsq{\sum_{j=1}^n[R_k]_{ij}x_j[k] - \hat x[k]}} + \sqrt{\frac{1}{\min \phi_k}} \sqrt{\sum_{i=1}^n \phiki\normsq{x_i[k]-\hat x[k]}}\nonumber\\
        &\overset{(a)}{\leq} \sqrt{\frac{1}{\min \phi_{k+1}}} \sigma_k \sqrt{\sum_{i=1}^n \phiki \normsq{x_i[k]-\hat x[k]}}+ \sqrt{\frac{1}{\min \phi_k}} \sqrt{\sum_{i=1}^n \phiki \normsq{x_i[k] -\hat x[k]}}\nonumber\\
        &=\left(\sqrt{\frac{1}{\min \phi_k}} + \sigma_k \sqrt{\frac{1}{\min \phi_{k+1}}} \right)\sqrt{\sum_{i=1}^n \phiki \normsq{x_i[k] -\hat x[k]}} \nonumber\\
        &\overset{(b)}{=} \frac{1}{\sqrt{2}} \left(\sqrt{\frac{1}{\min \phi_k}} + \sigma_k \sqrt{\frac{1}{\min \phi_{k+1}}} \right)\Dxk
        \label{eq:consecutive-steps-consensus-bound}
    \end{align}
    
    where (a) follows from \cref{lemma:R-contraction} and (b) follows from $D(\boldx[k], \phi_k)\triangleq \sqrt{\sum_{i=1}^n \sum_{j=1}^n \phiki \phikj \normsq{x_i[k]-x_j[k]}} = \sqrt{2} \sqrt{\sum_{i=1}^n \phiki \normsq{x_i[k]-\hat x[k]}}$ which can be obtained by expanding the squared norm into the inner product and simplifying similar to \cite[eq. (7a)]{nedich2022ab-timevarying}.

    Now, we turn back to the first term in \cref{eq:consecutive-terms-split}. From the row-stochasticity of $R$ and the convexity of squared norm, we have
    \begin{align}
        \sqrt{\sum_{i=1}^n \normsq{\sum_{j=1}^n [R_k]_{ij} (z_j[k]-x_j[k])}}&\leq \sqrt{\frac{1}{\min \phi_{k+1}}} \sqrt{\sum_{i=1}^n \phikpi \sum_{j=1}^n [R_k]_{ij} \normsq{z_j[k]-x_j[k]}}\nonumber\\
        &\overset{(a)}{=}\sqrt{\frac{1}{\min \phi_{k+1}}}\sqrt{\sum_{j=1}^n \phikj\normsq{z_j[k]-x_j[k]}}\nonumber\\
        &\leq \sqrt{\frac{1}{\min \phi_{k+1}}} \sqrt{\sum_{j=1}^n \phikj\normsq{z_j[k]-w_j[k]}}+\sqrt{\frac{1}{\min \phi_{k+1}}}\sqrt{\sum_{j=1}^n \phikj \normsq{w_j[k]-x_j[k]}}\label{eq:zk-xk-diff},
    \end{align}
    where (a) follows from the fact $\phi_{k+1}^T R = \phi_k^T$ (\cref{lemma:phi-sequence}). Notice that the first term in \cref{eq:zk-xk-diff} is exactly equal to $\sqrt{\frac{1}{\min \phi_{k+1}}}\normphik{\boldz[k]-\boldw[k]}$ which can be bounded using \cref{proposition:imperfect_gradient_error}. 
     Then, we look at $\normsq{w_j[k]-x_j[k]}$ in \cref{eq:zk-xk-diff}. We have
    \begin{align*}
        \normsq{w_j[k]-x_j[k]}&=\normsq{(1-\lambda)x_j[k]+\lambda \proj{x_j[k]-\eta n\pikj \nabla f(x_j[k])}-x_j[k]}\\
        &\leq \lambda^2 \normsq{x_j[k]-\proj{x_j[k]-\eta n \pikj \nabla f(x_j[k])}}.
    \end{align*}
    We have
    \begin{align*}
        \sqrt{\sum_{j=1}^n \phikj \normsq{w_j[k]-x_j[k]}}&=\lambda \sqrt{\sum_{j=1}^n \phikj \normsq{x_j[k]-\proj{x_j[k]-\eta n\pikj \nabla f(x_j[k])}}}\\
        &\leq \lambda \sqrt{\sum_{j=1}^n \phikj \normsq{x_j[k]-x^*}} + \lambda \sqrt{\sum_{j=1}^n \phikj \normsq{x^*-\proj{x_j[k]-\eta n\pikj \nabla f(x_j[k])}}}\\
        &\leq \lambda (1+ q_{k}(\eta, 1))\normphik{\boldx[k]-\boldx^*}.
    \end{align*}
    Finally, using this and the bound for $\normphik{\boldz[k]-\boldw[k]}$ (\cref{proposition:imperfect_gradient_error}) to simplify \cref{eq:zk-xk-diff}, we get
    \begin{align*}
        \sqrt{\sum_{i=1}^n \normsq{\sum_{j=1}^n [R_k]_{ij}(z_j[k]-x_j[k])}}&\leq \lambda \sqrt{\frac{1}{\min \phi_{k+1}}}(1+q_k(\eta, 1))\normphik{\boldx[k]-\boldx^*}\\
        &+\eta \lambda L \sqrt{\frac{n}{\min \phi_{k+1}\min \phi_k}} \Dxk \\ &+ \eta \lambda \sqrt{\frac{1}{\min \phi_{k+1}}} \Syk
    \end{align*}
Finally, we combine with the bound in \cref{eq:consecutive-steps-consensus-bound} to get
\begin{align*}
    \norm{\boldx[k+1]-\boldx[k]}_{\boldone} \leq \lambda \sqrt{\frac{1}{\min \phi_{k+1}}} (1+q_{k}(\eta, 1)) \normphik{\boldx[k]-\boldx^*} \\
    + \left[\frac{1}{\sqrt{2}} \left( \sqrt{\frac{1}{\min \phi_k}} + \sigma_k \sqrt{\frac{1}{\min \phi_{k+1}}}\right)+ \eta \lambda L \sqrt{\frac{n}{\min \phi_{k+1}\min \phi_k}}\right] \Dxk\\
    +\eta \lambda \sqrt{\frac{1}{\min \phi_{k+1}}} \Syk.
\end{align*}
Letting $\varphi_k=\sqrt{\frac{1}{\min \phi_k}}$ and dropping the $\frac{1}{\sqrt 2 }$ term gives the desired result.

Now, we can establish the final bound for the gradient tracking error

\subsubsection{Proof of \cref{lemma:gradient-tracking}}

    Follows from plugging in the result of \cref{proposition:consecutive-terms-bound} in place of the term $\norm{\boldx[k+1]-\boldx[k]}_{\boldone}$ in \cref{proposition:grad-tracking-bound-with-consecutive-terms}.
\subsection{Proof of \Cref{theorem:convergence-of-protocol}}\
\label{sec:proof_main_thm}
It is enough to show that the diagonal entries of $M(\eta, \lambda)$ are less than 1 and $\det M(\eta, \lambda) - I < 0$. Hence, for the diagonal entries, we need $\lambda < \frac{1-\sigma}{2 \varphi \sqrt{n}}$ and $\lambda < \frac{1-\tau}{r \varphi}$. Then, the determinant is as follows
\begin{align*}
    \det M(\eta, \lambda) - I &= \begin{vmatrix}
        -\eta \lambda n \psi \mu & \lambda \varphi \sqrt{n} & \lambda  L^{-1}  \\
            2\lambda & \sigma-1 + 2\lambda \sqrt{n} \varphi & 2 \lambda  L^{-1} \\
            2 \lambda Lr \varphi & Lr\varphi (1+\sigma) + \lambda L r \varphi^2 \sqrt{n} & \tau-1 + \lambda r \varphi 
    \end{vmatrix}
    \\
            &= -\eta \lambda n \psi \mu \begin{vmatrix}
                \sigma - 1 + 2\lambda \sqrt{n} \varphi & 2 \lambda L^{-1}\\
                Lr \varphi(1+\sigma) + \lambda Lr\varphi^2 \sqrt{n} & \tau - 1 + \lambda r \varphi
            \end{vmatrix}\\
            &- \lambda \varphi \sqrt{n} \begin{vmatrix}
                2 \lambda & 2 \lambda L^{-1}\\
                2 \lambda Lr\varphi & \tau - 1+ \lambda r \varphi
            \end{vmatrix}\\
            &+ \lambda L^{-1} \begin{vmatrix}
                2 \lambda & \sigma -1 + 2\lambda \sqrt{n} \varphi \\
                2 \lambda Lr\varphi & Lr\varphi(1+\sigma) +\lambda Lr \varphi^2 \sqrt{n}
            \end{vmatrix}
\end{align*}
Hence, we perform the following computations
\begin{align*}
    &-\eta \lambda n \psi \mu \begin{vmatrix}
                \sigma - 1 + 2\lambda \sqrt{n} \varphi & 2 \lambda L^{-1}\\
                Lr \varphi(1+\sigma) + \lambda Lr\varphi^2 \sqrt{n} & \tau - 1 + \lambda r \varphi
            \end{vmatrix}\\
            &= -\eta \lambda n\psi \mu \left[(\sigma-1)(\tau-1)+\lambda(2\varphi \sqrt{n}(\tau-1)+r\varphi(\sigma-1))+2\lambda^2 r\varphi^2 \sqrt{n} -2\lambda r\varphi (1+\sigma) - 2\lambda^2 r\varphi^2 \sqrt{n} \right]\\
            &=- \eta \lambda n\psi \mu \left[(\sigma-1)(\tau-1)-\lambda (2 \varphi \sqrt{n} (1-\tau)+r\varphi (1-\sigma) + 2r \varphi(1+\sigma) )\right]\\
            &=-\lambda \left[ \eta n\psi \mu (1-\sigma)(1-\tau) - \lambda \eta n\psi \mu (2 \varphi \sqrt{n} (1-\tau)+r\varphi (1-\sigma) + 2r \varphi(1+\sigma) )\right].
\end{align*}
The second term is
\begin{align*}
    &- \lambda \varphi \sqrt{n} \begin{vmatrix}
                2 \lambda & 2 \lambda L^{-1}\\
                2 \lambda Lr\varphi & \tau - 1+ \lambda r \varphi
                \end{vmatrix}\\
    &=-\lambda \varphi\sqrt{n} \left[2\lambda(\tau-1)+ 2\lambda^2 r\varphi- 4\lambda^2 r\varphi\right]\\
    &=-\lambda \left[-2\lambda \varphi \sqrt{n} (1-\tau)-2\lambda^2 r\varphi\right].
\end{align*}
And the final term is
\begin{align*}
    &\lambda L^{-1} \begin{vmatrix}
                2 \lambda & \sigma -1 + 2\lambda \sqrt{n} \varphi \\
                2 \lambda Lr\varphi & Lr\varphi(1+\sigma) +\lambda Lr \varphi^2 \sqrt{n}
            \end{vmatrix}\\
     &=\lambda L^{-1}\left[2\lambda L r\varphi(1+\sigma) + 2\lambda^2 Lr \varphi^2 \sqrt{n} - 2\lambda Lr\varphi (\sigma-1) - 4\lambda^2 Lr\varphi^2 \sqrt{n}\right]\\
     &=-\lambda \left[-2\lambda( r\varphi(1+\sigma) + r\varphi (1-\sigma))+2\lambda^2 r\varphi^2 \sqrt{n}\right].
\end{align*}
Combining all the terms, we get
\begin{align*}
    \det M(\eta, \lambda)-I&= -\lambda\left[\eta n \psi \mu (1-\sigma)(1-\tau) - \lambda K +2\lambda^2r\varphi(\sqrt{n}\varphi - 1) \right],
\end{align*}
where $K=(1+\eta n \psi \mu) [2\varphi \sqrt{n}(1-\tau)+r\varphi(1-\sigma) +2r \varphi(1+\sigma)] > 0$ since $\tau < 1$ and $\sigma < 1$. Now, we inspect the $\lambda^2$ term in the parenthesis. Notice that $\varphi \geq 1$ since $\phiki \leq 1$. Since we also have $\sqrt{n} \geq 1$, we get $2\lambda^2 r\varphi(\varphi \sqrt{n}-1) \geq 0$. So,
\begin{align*}
    \det M(\eta, \lambda) - I \leq -\lambda (\eta n \psi \mu (1-\sigma) (1-\tau) - \lambda K).
\end{align*}
Thus, to ensure that the determinant is negative, it is sufficient to choose $\lambda$ such that $\lambda K < \eta n \psi \mu (1-\sigma) (1-\tau)$ which gives is $\lambda < \frac{\eta n \psi \mu (1-\sigma) (1-\tau)}{K}$. Combining all the bounds, we have
\begin{align*}
    \lambda < \min \left\{\frac{1-\sigma}{2\varphi \sqrt{n}}, \frac{1-\tau}{r\varphi}, \frac{\eta n \psi \mu (1-\sigma)(1-\tau)}{(1+\eta n \psi \mu) [2\varphi \sqrt{n}(1-\tau)+r\varphi(1-\sigma) +2r \varphi(1+\sigma)]}\right\}
\end{align*}
as desired.
\end{document}